\newcommand*\circled[1]{\tikz[baseline=(char.base)]{\node[shape=circle,draw,inner sep=1pt] (char) {#1};}}
\newcommand*\squared[1]{\tikz[baseline=(char.base)]{\node[shape=rectangle,draw,inner sep=2pt] (char) {#1};}}
\numberwithin{equation}{section}
\begin{document}

\newtheorem{definition}{Definition}[section]
\newtheorem{theorem}[definition]{Theorem}
\newtheorem{lemma}[definition]{Lemma}
\newtheorem{corollary}[definition]{Corollary}
\newtheorem{remark}[definition]{Remark}
\newtheorem{proposition}[definition]{Proposition}
\newtheorem{conjecture}[definition]{Conjecture}

\title{Inventory Accumulation with $k$ Products} 

\author{\small Cheng Mao and Tianyou Zhou}

\address{Massachusetts Institute of Technology}

\date{\today}

\begin{abstract}
Sheffield (2011) proposed an inventory accumulation model with two types of products to study the critical Fortuin-Kasteleyn model on a random planar map, and showed that a two-dimensional inventory accumulation trajectory in the discrete model scales to a correlated planar Brownian motion. In this work, we generalize the inventory model to $k$ types of products for any integer $k \ge 2$, and prove that the corresponding trajectory scales to a $k$-dimensional Brownian motion with an appropriate covariance matrix. 
\end{abstract}

\keywords{Inventory accumulation, first-in-last-out models, scaling limits, Brownian motion, random walks}

\subjclass[2010]{Primary 60F17; Secondary 60G50}

\maketitle

\section{Introduction} \label{intro}

\emph{Planar maps} are connected planar graphs embedded into the two-dimensional sphere defined up to homeomorphisms of the sphere \cite{Tut63}. Extensive work has been done to study the scaling limits of random planar maps; see e.g. \cite{LeG10, LeGMen10, LeGMie11, LeGMie12, Mie13, BetJacMie14} for recent developments.
In the paper \cite{She11}, Sheffield introduced a new approach to study planar maps by constructing a bijection between the \emph{critical Fortuin-Kasteleyn (FK) cluster model} on a random planar map \cite{ForKas72, Gri06} and an inventory accumulation model at a last-in-first-out retailer with two types of products, called \emph{hamburgers} and \emph{cheeseburgers}. In this inventory model, production of a hamburger, production of a cheeseburger, consumption of a hamburger, consumption of a cheeseburger and consumption of the \emph{freshest} burger happen with respective probabilities $\frac 14, \frac 14, \frac{1-p}4, \frac{1-p}4$ and $\frac p2$ at each discrete time point, where the freshest burger is the most recently produced burger regardless of type. The paper proved that the evolution of the two-burger inventory in the infinite-volume version of the model scales to a two-dimensional Brownian motion with covariance depending on $p$. An interesting phase transition happens at $p = 1/2$. In particular, when $p \geq 1/2$, the burger inventory remains balanced as the time goes to infinity, i.e., the discrepancy between the two burgers remains small.

More recently, efforts have been made to understand variations of the inventory accumulation model and their connections to FK models on random planar maps. A conditional version of the hamburger-cheeseburger model was studied independently in \cite{GwyMaoSun15} and \cite{BerLasRay15}. In particular, \cite{GwyMaoSun15} showed that the inventory trajectory converges to a correlated Brownian motion conditioned to stay in a quadrant. In addition, the finite-volume version of the two-dimensional model was studied in \cite{GwySun15a,GwySun15b}, where the trajectory was shown to scale to a Brownian motion conditioned to return to the origin. Furthermore, the hamburger-cheeseburger model has also been applied to study other statistical physics models, e.g. the abelian sandpile model and the uniform spanning unicycle on random planar maps \cite{SunWil15}.

\subsection*{Our contributions}

First of all, apart from random planar maps, the inventory accumulation model is of its own interest in view of its nice properties and the scaling limit results. In this work, we explore a new direction to generalize the model, namely to study inventory accumulation with $k$ types of products (referred as burgers) for any integer $k \ge 2$. In particular, we will prove that the corresponding $k$-dimensional trajectory scales to a $k$-dimensional Brownian motion and identify its covariance matrix. A phase transition occurs at the critical probability $p = 1-1/k$ which generalizes the two-dimensional result. 

The high-level strategy of our proof for the k-dimensional scaling limit result is the same as that in dimension two. However, it is worth noting that many adjustments need to be made and we highlight some originality here. First, the calculation of the covariance matrix of the limiting Brownian motion is more complicated, as the interactions between different types of products are more complex in higher dimensions. Additionally, several new arguments are introduced, e.g. the inductive argument in the proof of Lemma~\ref{lemma3.5}, to generalize the proofs beyond dimension two. Furthermore, the monotonicity properties used in \cite[Section~3.4]{She11} do not hold in higher dimensions since there is no natural ordering of inventory stacks when $k > 2$. Instead, we make use of the property that if two inventory stacks are close in an appropriate sense, then they still stay close after adding the same product or order to each of them.

So far, little research has been devoted to higher-dimensional analogues of random planar maps, partly due to the difficulty of enumeration and lack of bijective representations. See \cite{BenCur11} for an interesting higher-dimensional result among the few. 
We hope that our generalized model and results can be used to construct potentially interesting higher-dimensional objects, possibly as follows.

When there are only two types of products, i.e. hamburgers and cheeseburgers, the number of hamburgers and the number of cheeseburgers after $n$ steps can be interpreted as
two walks on $\mathbb Z$. As explained in \cite{She11}, each of these walks
separately encodes a tree (via a standard bijection between walks and
trees) along with a path tracing the boundary of the tree. Furthermore, one can form a larger graph, which contains a planar map as a subgraph, by starting with
these two trees and then adding an edge between a vertex on the first
tree and a vertex on the second tree if those vertices are both
visited at the same time by the traversing paths. This construction gives a bijection between burger-order sequences and planar maps; see \cite[Section~4]{She11} for more details.

It is straightforward to generalize this construction to our setting
to obtain $k$ trees along with extra edges joining vertices on
different trees. We are not aware of any natural physical
interpretation of the random graph obtained this way when $k > 2$, but
we feel that this might be an interesting avenue for future research.

\medskip

The rest of the paper is organized as follows.
In Section~\ref{setup}, we will describe the model in detail and state the main scaling limit theorem. Section~\ref{covariance} is devoted to computing the covariance matrix of the limiting Brownian motion. We prove various technical estimates in Section~\ref{excursion} and \ref{tail_estimates}, and finish the proof of the main theorem in Section~\ref{main_proof}.

\subsection*{Acknowledgement}
We thank Scott Sheffield and Xin Sun for suggesting and initiating this project. This paper could not be completed without their helpful ideas and suggestions. We thank the 2014 UROP+ program at Massachusetts Institute of Technology during which part of this work was completed, and thank Pavel Etingof and Slava Gerovitch for directing the program.

\section{Model setup and the main theorem} \label{setup}

We consider a last-in-first-out retailer with $k$ types of products, to which we refer as \emph{burger 1}, \dots, \emph{burger k}. 
To adapt the two-dimensional model introduced in \cite{She11}, we define an alphabet of symbols \[ \Theta = \big\{\circled{1}, \circled{2}, \dots, \circled{k}, \squared{1}, \squared{2}, \dots, \squared{k}, \squared{F}\big\} \]
which represent the $k$ types of burgers, the corresponding $k$ types of orders each of which consumes the most recently produced burger of the same type, and the ``flexible'' order which consumes the most recently produced burger regardless of type in the remaining burger stack.

A word in the alphabet $\Theta$ is a concatenation of symbols in $\Theta$ that represents a series of events happened at the retailer. For example, if $W = \circled{2} \, \circled{3} \, \squared{3} \, \circled{1} \, \squared{2} \, \squared{F}$, then the word $W$ represents the series of events: a burger 2 is produced, a burger 3 is produced, a burger 3 is ordered, a burger 1 is produced, a burger 2 is ordered and the freshest burger is ordered, which is burger 1 in this case.

To describe the evolution of burger inventory mathematically, we consider the collection $\mathcal{G}$ of (reduced) words in the alphabet $\Theta$ modulo the following relations
\begin{equation} \label{word_relation}
\circled{i} \, \squared{i} = \circled{i} \, \squared{F} = \varnothing \quad \text{ and } \quad
\circled{i} \, \squared{j} = \squared{j} \, \circled{i}
\end{equation}
where $1 \leq i,j \leq k$ and $i \neq j$. Intuitively, the first relation means that an order \squared{i} or \squared{F} consumes a preceding burger \circled{i}, and the second means that we move an order one position to the left if it does not consume the immediately preceding burger. For example,
\[ W = \circled{2} \, \circled{3} \, \squared{3} \, \circled{1} \, \squared{2} \, \squared{F} = \circled{2} \, \circled{1} \, \squared{2} \, \squared{F} = \circled{2} \, \squared{2} \, \circled{1} \,  \squared{F} = \varnothing, \]
where $\squared{3}$ consumes $\circled{3}$,  $\squared{2}$ consumes $\circled{2}$ and  $\squared{F}$ consumes $\circled{1}$. By the same argument as in the proof of \cite[Proposition~2.1]{She11}, we see that $\mathcal{G}$ is a semigroup with $\varnothing$ as the identity and concatenation as the binary operation.

Let $X(n)$ be i.i.d. random variables indexed by $\mathbb{Z}$ (i.e. time), each of which takes its value in $\Theta$ with respective probabilities
\[ \Big\{ \frac 1{2k}, \frac 1{2k}, \dots, \frac 1{2k}, \frac{1-p}{2k}, \frac{1-p}{2k}, \dots, \frac{1-p}{2k}, \frac p2 \Big\}. \]
Let $\mu$ denote the corresponding probability measure on the space $\Omega$ of maps from $\mathbb{Z}$ to $\Theta$. In this paper, we follow the convention that probabilities and expectations are with respect to $\mu$ unless otherwise mentioned. For $m \leq n$, we write
\[ X(m,n) := \overline{X(m) X(m+1) \cdots X(n)} \]
where $\overline{\ \cdot\ }$ means that a word is reduced modulo the relations \eqref{word_relation}. Then $X(m,n)$ describes the remaining orders and burgers (after all consumptions) between time $m$ and time $n$ at the retailer.

If a burger is added at time $m$ and consumed at time $n$, we define $\phi(m) = n$ and $\phi(n) = m$. Otherwise, if a burger at $m$ has no corresponding order, then $\phi(m) = \infty$, or if an order at $n$ has no corresponding burger, then $\phi(n) = - \infty$. Proposition 2.2 in \cite{She11} remains valid in this $k$-burger setting:

\begin{proposition} \label{proposition1.1}
It is $\mu$-almost sure that for every $m \in \mathbb{Z}$, $\phi(m)$ is finite. 
\end{proposition}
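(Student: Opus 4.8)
The plan is to adapt the proof of \cite[Proposition~2.2]{She11} to the $k$-type setting. By stationarity of the i.i.d.\ family $(X(n))_{n\in\mathbb Z}$ under $\mu$, it suffices to prove that $\mu(\phi(0)=\infty)=0$ and $\mu(\phi(0)=-\infty)=0$, the first saying that the burger produced at time $0$ is almost surely eventually consumed and the second that the order placed at time $0$ is almost surely eventually matched; write $q$ and $r$ for these two probabilities, so the goal is $q=r=0$. (Concretely, on $\{X(0)=\circled{i}\}$ the event $\{\phi(0)=\infty\}$ says that the number of type-$i$ burgers sitting at or above the one produced at time $0$ in the stack never reaches $0$, since that burger is the last of them to be consumed; this is the one-dimensional quantity on which one would most naturally try to run a recurrence argument, and I return to it at the end.)

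I would first pass to the reduced words $X(1,n)$. Using the relations \eqref{word_relation} one checks, exactly as in the case $k=2$, that every fully reduced word has the form $O\,B$, a block $O$ of order symbols followed by a block $B$ of burger symbols, since an order either cancels a preceding burger or commutes past it to the left, so that no burger is ever immediately followed by an order. Consequently a burger in $[1,n]$ with $\phi=\infty$, being never cancelled, survives in $X(1,n)$ and lies in its burger block $B_n$; likewise every order in $[1,n]$ with $\phi=-\infty$ lies in the order block $O_n$. Therefore
\[
\#\{\,m\in[1,n]:X(m)\text{ is a burger and }\phi(m)=\infty\,\}\le|B_n|,\qquad \#\{\,m\in[1,n]:X(m)\text{ is an order and }\phi(m)=-\infty\,\}\le|O_n|.
\]
Since the shift on $\Omega$ is ergodic (the $X(n)$ being i.i.d.), Birkhoff's theorem shows that, after dividing by $n$, the two left-hand sides converge almost surely to $q$ and $r$. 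Hence it suffices to prove that the reduced word is \emph{sublinear}: $\bigl(|B_n|+|O_n|\bigr)/n\to 0$ almost surely.

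This sublinearity estimate is where the real work is, and I expect it to be the main obstacle. One half is easy: with $s(\circled{i})=1$ and $s(\squared{i})=s(\squared{F})=-1$, the signed count $\sum_{m=1}^{n}s(X(m))$ is a mean-zero i.i.d.\ sum that is left unchanged by every cancellation and every commutation, so $|B_n|-|O_n|=\sum_{m=1}^{n}s(X(m))$, which is $o(n)$ almost surely by the law of large numbers; it thus suffices to bound $|B_n|$, equivalently $|O_n|$. As one appends $X(n+1)$ on the right, the length $|B_n|+|O_n|$ changes by $\pm 1$, and its conditional drift is nonnegative, being $1$ minus twice the conditional probability that the appended symbol is an order that cancels one of the burgers in $B_n$; in particular this drift \emph{vanishes} exactly when $B_n$ contains burgers of all $k$ types, and is positive only on the degenerate configurations missing some type (it equals $1$ when $B_n=\varnothing$). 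It then remains to show that the accumulated positive drift up to time $n$ is $o(n)$ --- equivalently, that the degenerate configurations are visited with vanishing frequency, so that each burger type is only rarely absent from $B_n$ --- after which a comparison of the length process with a recurrent random walk rules out linear growth. Making this precise is the crux, and is where the argument genuinely differs from \cite[Section~2]{She11}: when $k>2$ there is no ordering of stacks to exploit, and the flexible order $\squared{F}$ couples all $k$ types, so one must instead use the symmetry of the dynamics under permuting types, together with the stability property emphasized in the introduction --- that two stacks which are close in the appropriate sense remain close after appending the same burger or order. One could alternatively try to show directly that the one-dimensional count from the first paragraph is recurrent, but that meets the same coupling difficulty.
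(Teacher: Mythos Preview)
Your reduction to sublinearity of $|X(1,n)|$ is sound, but the argument stops precisely where you yourself flag ``the crux'': you never show that the degenerate configurations (those in which $B_n$ lacks some burger type) are visited with vanishing frequency, and the appeal to ``symmetry of the dynamics under permuting types'' together with the ``stability property'' is too vague to close the gap. Without this step the accumulated positive drift is not known to be $o(n)$, and the comparison with a recurrent walk cannot be carried out. It is also worth noting that this sublinearity route is \emph{not} the argument of \cite[Proposition~2.2]{She11} that you announce you are adapting.

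The paper, following Sheffield, avoids this difficulty entirely. Instead of bounding $|X(1,n)|$, one lets $E_i$ be the event that every type-$i$ burger is eventually consumed. Each $E_i$ is shift-invariant, hence has probability $0$ or $1$ by ergodicity, and by the symmetry among types all the $E_i$ have the same probability; it therefore suffices to show $\mu\bigl(\bigcup_i E_i\bigr)=1$. This is a short contradiction: on the complement there is, for every $i$, a never-consumed burger of type $i$; past the latest of these, any incoming order necessarily finds a match (otherwise it would consume one of the protected burgers), so the order block of the reduced word over that half-line stays bounded, whereas the total burger--order count is a recurrent simple random walk and forces that block to grow without bound. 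A symmetric argument handles orders. This uses only recurrence of the one-dimensional total count and the zero-one law, and never requires any control on how often $B_n$ contains all $k$ types.
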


Since a slight modification of the original proof will work here, we only describe the ideas. Let $E_i$ be the event that every burger of type $i$ is ultimately consumed. It can be shown that the union of $E_i$'s has probability one, and since $E_i$'s are translation-invariant, the zero-one law implies that each of them occurs almost surely. A similar argument works for orders, so each $X(m)$ has a correspondence, which is the statement of Proposition~\ref{proposition1.1}.

Hence we can define
\[
Y(n):=
\begin{cases}
X(n) &  X(n) \neq \squared{F}, \\
\squared{i} &  X(n) = \squared{F}, X(\phi(n)) = \circled{i}.
\end{cases}
\]
Moreover, we define the \emph{semi-infinite burger stack} $X(-\infty,n)$ to be the sequence of $X(m)$ where $m \leq n$ and $\phi(m) > n$. It contains no orders almost surely since each order consumes an earlier burger at a finite time due to Proposition~\ref{proposition1.1}.  It is almost surely infinite, because otherwise the number of burgers minus the number of orders in $X(-\infty,n)$ is a simple random walk in $n$ and will visit $-1$ at a finite time almost surely, but an order added at or before that time will consume no burger which contradicts Proposition~\ref{proposition1.1}.

Next, we give definitions of several important discrete processes that will be shown to scale to Brownian motions.

\begin{definition} \label{def0}
For a word $W$ in the alphabet $\Theta$, we define $\mathcal{C}^i(W)$ to be the net burger \emph{count} of type $i$, i.e., the number of \emph{\circled{i}} minus the number of \emph{\squared{i}}. Also, we define $\mathcal{C}(W)$ to be the total burger count, i.e.,
\[ \mathcal{C}(W) := \sum_{i=1}^k \mathcal{C}^i(W). \]

If $W$ has no \emph{$\squared{F}$}, then for $1 \leq i \neq j \leq k$, we define $\mathcal{D}^{ij}(W)$ to be the net \emph{discrepancy} of burger $i$ over burger $j$, i.e., 
\[ \mathcal{D}^{ij}(W) := \mathcal{C}^i(W) - \mathcal{C}^j(W). \]
\end{definition}

\begin{definition} \label{def1}
Given the infinite $X(n)$ sequence, let $\mathcal{C}^i_n$ be the integer-valued process defined by $\mathcal{C}^i_0 = 0$ and $\mathcal{C}^i_n - \mathcal{C}^i_{n-1} = \mathcal{C}^i(Y(n))$ for all $n$. Let $\mathcal{C}_n := \sum_{i=1}^k \mathcal{C}^i_n$ and $\mathcal{D}^{ij}_n := \mathcal{C}^i_n - \mathcal{C}^j_n.$

For any integer $n$, we define two vector-valued processes $A_n$ and $\widetilde{A}_n$ by
\[ A_n := (\mathcal{D}^{12}_n, \mathcal{D}^{23}_n, \dots, \mathcal{D}^{k-1,k}_n, \mathcal{C}_n) \quad \text{ and } \quad \widetilde{A}_n := (\mathcal{C}^1_n, \mathcal{C}^2_n, \dots, \mathcal{C}^k_n). \]
We extend these definitions to real numbers by piecewise linear interpolation so that $t \mapsto A_t$ and $t \mapsto \widetilde{A}_t$ are infinite continuous paths.
\end{definition}

When $n>0$, we have $\mathcal{C}^i_n = \mathcal{C}^i(Y(1,n))$; when $n<0$, we have $\mathcal{C}^i_n = \mathcal{C}^i(Y(n+1,0))$; similarly for $\mathcal{C}_n$ and $\mathcal{D}^{ij}_n$. 
As shorthand notation, we write
\[ \mathcal{C}^i(m) = \mathcal{C}^i(Y(m)) \quad \text{ and } \quad \mathcal{C}^i(m,n) = \mathcal{C}^i(Y(m,n))\]
for $m \leq n$, and we let $\mathcal{C}(m), \mathcal{D}^{ij}(m), \mathcal{C}(m,n)$ and  $\mathcal{D}^{ij}(m,n)$ be defined similarly.

Note that the two processes $A_n$ and $\widetilde{A}_n$ actually code the same information about the evolution of the sequence $Y(n)$. Specifically, if we view $A_n$ and $\widetilde{A}_n$ as column vectors, then it follows from Definition \ref{def1} that $A_n = M \widetilde{A}_n$ where $M$ is a $k \times k$ invertible matrix defined by
\[
M_{ij} = 
\begin{cases} 
1 & i=j, 1\le i \le k-1, \\
-1 & i+1 = j, 1 \le i \le k-1, \\
1 & i=k, \\
0 & \text{otherwise}.
\end{cases}
\]

It is more natural to describe the evolution of $Y(1,n)$ by $\widetilde{A}_n$ as its $i$-th coordinate corresponds to the burger count of type $i$. However, $A_n$ gives another interesting perspective to view the stack $Y(1,n)$. Consider the line $\mathscr{L}$ through $(0,\dots,0)$ and $(1,\dots,1)$ in $\mathbb{R}^k$. Since $\mathcal{C}_n$ is a simple random walk along $\mathscr{L}$ and is independent of the other $k-1$ coordinates of $A_n$, we may view $A_n$ as the Cartesian product of a one-dimensional simple random walk and an independent walk on the perpendicular $(k-1)$-dimensional hyperplane. The idea of separating the net burger count from the net burger discrepancies is inherited from the two-dimensional case.

With the linear relation established between $A_n$ and $\widetilde{A}_n$, we are ready to state two equivalent versions of the main scaling limit theorem.

\begin{theorem}[Main theorem, version 1]
\label{main_theorem}
As $\varepsilon \to 0$, the random variables $\varepsilon A_{t/\varepsilon^2}$ converge in law (with respect to the $L^\infty$ metric on compact intervals) to 
\[(\mathbf{B}^1_{\alpha t}, B^2_t),\]
where $\mathbf{B}^1_t = (W_t^1, \dots, W_t^{k-1})$ is a $(k-1)$-dimensional Brownian motion with covariance
\[
\operatorname{Cov}(W_t^i, W_t^j) = 
\begin{cases}
t &  i = j, \\
-\frac t2 &  |i-j| = 1, \\
0 & \text{otherwise},
\end{cases}
\]
$B_t^2$ is a standard one-dimensional Brownian motion independent of $\mathbf{B}^1_t$ and $\alpha := \max\{\frac 2k - \frac{2p}{k-1}, 0\}$.
\end{theorem}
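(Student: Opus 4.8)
The plan is to prove a functional invariance principle for the stationary increment sequence of $A_n$, to read the limiting covariance matrix off the permutation symmetry among the $k$ labels, and then to reduce the whole theorem to computing a single scalar by analysing the burger stack. I would begin with two structural observations that cost nothing. Since $Y(m)$ equals $X(m)$ unless $X(m)=\squared{F}$, in which case both are orders, the increments $\mathcal C_n-\mathcal C_{n-1}=\mathrm{sign}\,X(n)$ are i.i.d.\ and equal to $\pm1$ with probability $\tfrac12$ each (burgers have total probability $k\cdot\tfrac1{2k}=\tfrac12$, orders $k\cdot\tfrac{1-p}{2k}+\tfrac p2=\tfrac12$), so $\varepsilon\,\mathcal C_{t/\varepsilon^2}\Rightarrow B^2_t$ by Donsker. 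Moreover $\mu$ is invariant under relabeling the burger types, so $\mathbb E[(\mathcal C^i_n)^2]=:v$ does not depend on $i$ and $\mathbb E[\mathcal C^i_n\mathcal C^j_n]=:c$ does not depend on the pair $i\neq j$; expanding $\mathcal D^{ij}_n=\mathcal C^i_n-\mathcal C^j_n$ then gives $\mathbb E[\mathcal C_n\mathcal D^{i,i+1}_n]=0$, $\mathbb E[\mathcal D^{i,i+1}_n\mathcal D^{j,j+1}_n]=0$ for $|i-j|\ge2$, and $\mathbb E[\mathcal D^{i,i+1}_n\mathcal D^{i+1,i+2}_n]=-\tfrac12\,\mathbb E[(\mathcal D^{12}_n)^2]$. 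Combined with $\mathbb E[\mathcal C_n^2]=n$, this shows that every entry of the limiting covariance is determined by $\alpha:=\lim_{n\to\infty}\tfrac1n\,\mathbb E[(\mathcal D^{12}_n)^2]$ and that the matrix is block diagonal, so in the Gaussian limit the count is automatically independent of the discrepancies.

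I would then prove the functional CLT for $\mathbf D_n:=(\mathcal D^{12}_n,\dots,\mathcal D^{k-1,k}_n)$. Its increments form a shift-stationary sequence, and on an event of probability $1-o_L(1)$ the increment at time $m$ is determined by $(X(\ell))_{m-L\le\ell\le m}$: a flexible order at time $m$ consumes a burger produced at time $\phi(m)<m$ and is resolved into $\squared{i}$ for that burger's type, and the backward reach $m-\phi(m)$, together with the depth one must inspect to certify which burger is freshest, both have light tails — this is exactly what the estimates of Sections~\ref{excursion} and~\ref{tail_estimates} provide. With such quantitative weak dependence, a standard invariance principle for stationary weakly dependent sequences (e.g.\ via a martingale approximation, or via summable covariances plus a tightness argument) gives $\varepsilon\,\mathbf D_{t/\varepsilon^2}\Rightarrow\mathbf B^1_{\alpha t}$ jointly with $\varepsilon\,\mathcal C_{t/\varepsilon^2}$, and the block-diagonal covariance from the previous paragraph makes the joint Gaussian limit a product. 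When $\alpha=0$ the statement degenerates to $\varepsilon\sup_{|s|\le T}|\mathbf D_{s/\varepsilon^2}|\to0$ in probability, which I would deduce from a sublinear bound $\mathbb E[|\mathcal D^{12}_n|]=o(\sqrt n)$ together with a maximal inequality; this is where the ``two nearby stacks stay nearby'' comparison stands in for the monotonicity of \cite[Section~3.4]{She11}, and the induction in Lemma~\ref{lemma3.5} carries the comparison across all $k$ coordinates.

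The main obstacle is identifying $\alpha$, which I would do as in Section~\ref{covariance}. Write $\mathcal C^i_n=N^i_n-F^i_n$, where $N^i_n=\#\{\circled{i}\text{ in }[1,n]\}-\#\{\squared{i}\text{ in }[1,n]\}$ has i.i.d.\ increments and $F^i_n$ counts the flexible orders in $[1,n]$ that consume a type-$i$ burger, so $\mathcal D^{12}_n=P_n-G_n$ with $P_n:=N^1_n-N^2_n$ and $G_n:=F^1_n-F^2_n$. An elementary single-step variance gives $\mathbb E[P_n^2]=\tfrac{2-p}{k}\,n$, so the problem reduces to evaluating $\lim\tfrac1n\mathbb E[G_n^2]$ and $\lim\tfrac1n\mathbb E[P_nG_n]$. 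Both require the stationary law of the top of the burger stack — above all, the probability that the two burgers consumed by a pair of flexible orders have the same type, tracked as the stack evolves between them — and it is precisely this $k$-type interaction at the top of the stack that makes the bookkeeping substantially heavier than in \cite{She11}. Carrying it out yields $\alpha=\tfrac2k-\tfrac{2p}{k-1}$ for $p<1-\tfrac1k$; for $p\ge1-\tfrac1k$ the same estimates (or the comparison argument above) force $\mathbb E[|\mathcal D^{12}_n|]=o(\sqrt n)$, hence $\alpha=0$, which is the phase transition.

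Finally I would assemble everything, as in Section~\ref{main_proof}: combine the joint invariance principle for $(\mathbf D_n,\mathcal C_n)$ with the covariance computed above, absorb $\alpha$ as a deterministic time change of the $(k-1)$-dimensional Brownian motion with the stated covariance, and use the block-diagonal structure to split the limit as $(\mathbf B^1_{\alpha t},B^2_t)$ with $\mathbf B^1$ independent of $B^2$. The piecewise-linear interpolation and the convergence in the $L^\infty$ metric on compact intervals are handled in the usual way by tightness together with convergence of finite-dimensional distributions, and version~2 of the theorem is immediate on applying the continuous linear map $M^{-1}$.
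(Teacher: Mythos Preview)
Your symmetry reduction in the first paragraph is correct and is a pleasant economy over the explicit case analysis in Section~\ref{covariance}; the block-diagonal structure and the relations among the $\mathcal D^{i,i+1}$ covariances really do follow from relabeling invariance alone. The decomposition $\mathcal D^{12}_n=P_n-G_n$ is also a reasonable way to organize the variance calculation, though to close it you would still need the quantity $\chi=\mathbb E[|X(-J,-1)|]$ (or something equivalent), just as the paper does.

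The genuine gap is in your second paragraph. The assertion that the backward reach $m-\phi(m)$ has light tails is false, and nothing in Sections~\ref{excursion} or~\ref{tail_estimates} says so. When $X(0)=\squared{F}$ one has $\phi(0)=-J$, and $J$ is the first time, reading backward, that a burger survives into the reduced word $X(-J,-1)$; this is a first-passage/record time for the simple random walk $n\mapsto\mathcal C(-n,-1)$, so $\mathbb P(J>n)\asymp n^{-1/2}$. Thus the increment $\mathcal D^{ij}(Y(m))$ is \emph{not} determined by $(X(\ell))_{m-L\le\ell\le m}$ except with probability $1-O(L^{-1/2})$, and the standard invariance principles for short-range dependent stationary sequences (mixing conditions, martingale approximation with summable coefficients, etc.) do not apply without a substantially new argument. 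Lemma~\ref{lemma4.4} controls the tails of $\max_l|\mathcal D^{ij}_l|/\sqrt n$, and Lemma~\ref{lemma4.5} controls $|X(1,n)|/\sqrt n$; neither controls $m-\phi(m)$.

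The paper avoids this obstacle by a completely different mechanism, splitting on the value of $\chi$. When $\chi<2$ (equivalently $p>1-\tfrac1k$, a fact established only \emph{a posteriori} in Section~\ref{chi<2}) one shows directly that $\operatorname{Var}[\mathcal D^{ij}_n]=o(n)$ via Lemmas~\ref{lemma3.6} and~\ref{lemma4.5}, so the discrepancy block collapses and only $\mathcal C_n$ survives. When $\chi=2$ and $p<1-\tfrac1k$ one has $\mathbb E[|E|]=\infty$, and the second half of Lemma~\ref{lemma3.6} gives that the number of unresolved \squared{F} symbols in the \emph{reduced} word $X(1,\lfloor tn\rfloor)$ is $o(\sqrt n)$ with high probability. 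It is this, not any short-range dependence of increments, that makes the law of $A_{\lfloor tn\rfloor}$ equal, up to an $o(\sqrt n)$ error with high probability, to the convolution of $l$ independent copies of the law of $A_{\lfloor tn/l\rfloor}$; tightness from Lemma~\ref{lemma4.4} then forces every subsequential limit to be infinitely divisible with continuous paths, hence Gaussian with the covariance computed in Section~\ref{covariance}. In short, the paper never proves a weak-dependence CLT; it proves approximate additivity at the level of the reduced word, and that is the idea your outline is missing.
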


\begin{theorem}[Main theorem, version 2]
\label{main_theorem2}
As $\varepsilon \to 0$, the random variables $\varepsilon \widetilde{A}_{t/\varepsilon^2}$ converge in law (with respect to the $L^\infty$ metric on compact intervals) to a $k$-dimensional Brownian motion \[\widetilde{\mathbf{B}}_t = (V^1_t, \dots, V^k_t)\] with covariance
\[
\operatorname{Cov}(V_t^i, V_t^j) = 
\begin{cases}
(\frac 1{k^2} - \frac{\alpha}{2k} + \frac {\alpha}{2})t &  i = j, \\
(\frac 1{k^2} - \frac{\alpha}{2k})t &  i\ne j, 
\end{cases}
\]
where $\alpha := \max\{\frac 2k - \frac{2p}{k-1}, 0\}$.
\end{theorem}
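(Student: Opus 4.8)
The plan is to deduce Theorem~\ref{main_theorem2} from Theorem~\ref{main_theorem} by means of the linear change of coordinates $A_n = M\widetilde A_n$, equivalently $\widetilde A_n = M^{-1}A_n$, recorded above. Since $M$ is invertible, the map $\Phi$ that acts on continuous $\mathbb R^k$-valued paths by $(\Phi f)(t) := M^{-1} f(t)$ is bi-Lipschitz with respect to the $L^\infty$ metric on any compact interval, hence a homeomorphism of the path space. Because $\varepsilon\widetilde A_{t/\varepsilon^2} = \Phi\big(\varepsilon A_{t/\varepsilon^2}\big)$ as random paths, the continuous mapping theorem together with Theorem~\ref{main_theorem} shows that $\varepsilon\widetilde A_{t/\varepsilon^2}$ converges in law, with respect to the $L^\infty$ metric on compact intervals, to
\[
\widetilde{\mathbf B}_t := M^{-1}\big(\mathbf B^1_{\alpha t},\, B^2_t\big)^{\mathsf T}.
\]

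The process $(\mathbf B^1_{\alpha t}, B^2_t)$ is a centered continuous Gaussian process with stationary independent increments starting at the origin, and all of these properties are preserved under the fixed linear map $M^{-1}$. Therefore $\widetilde{\mathbf B}_t$ is a (possibly degenerate) $k$-dimensional Brownian motion, and it remains only to compute its covariance matrix $\operatorname{Cov}\big(\widetilde{\mathbf B}_t\big) = t\, M^{-1}\Sigma\,(M^{-1})^{\mathsf T}$, where $\Sigma$ is the covariance matrix of the vector $\big(\mathbf B^1_1, B^2_1\big)$.

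By Theorem~\ref{main_theorem}, $\Sigma$ is block diagonal, $\Sigma = \operatorname{diag}(\alpha T,\, 1)$, where $T$ is the $(k-1)\times(k-1)$ tridiagonal matrix with $1$ on the main diagonal and $-\tfrac12$ on the two adjacent diagonals. Inverting $M$ by telescoping the relations $\widetilde A^i - \widetilde A^{i+1} = \mathcal D^{i,i+1}$ for $1 \le i \le k-1$ together with $\sum_{i=1}^k \widetilde A^i = \mathcal C$ gives the explicit formula $(M^{-1})_{ik} = 1/k$ and $(M^{-1})_{ij} = \mathbbm 1\{j\ge i\} - j/k$ for $1 \le j \le k-1$. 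Substituting this into $M^{-1}\Sigma (M^{-1})^{\mathsf T}$, the last column of $M^{-1}$ (which comes from the burger-count coordinate $\mathcal C$) contributes the rank-one matrix $\tfrac1{k^2}J$, where $J$ is the all-ones $k \times k$ matrix, while the first $k-1$ columns contribute $\alpha\, N T N^{\mathsf T}$ with $N$ the $k \times (k-1)$ matrix $N_{ij} = \mathbbm 1\{j \ge i\} - j/k$. A direct computation gives the identity $N T N^{\mathsf T} = \tfrac12 I - \tfrac1{2k} J$, so that
\[
\operatorname{Cov}\big(\widetilde{\mathbf B}_t\big) = t\Big(\tfrac1{k^2} J + \alpha\big(\tfrac12 I - \tfrac1{2k} J\big)\Big) = t\Big(\tfrac{\alpha}{2}\, I + \big(\tfrac1{k^2} - \tfrac{\alpha}{2k}\big) J\Big),
\]
which is exactly the asserted covariance.

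The only genuinely computational step is the last one: inverting $M$ and verifying the bilinear identity $N T N^{\mathsf T} = \tfrac12 I - \tfrac1{2k} J$, which forces all dependence on the index pair $(i,j)$ beyond the value of $\mathbbm 1\{i=j\}$ to cancel. Everything else — the continuous mapping argument and the identification of a linear image of a Brownian motion as a Brownian motion — is soft. We note in passing that this computation also furnishes a consistency check on Theorem~\ref{main_theorem}: the fact that $\operatorname{Cov}(V^i_t, V^j_t)$ must take one common value over all $i \ne j$ is precisely what pins down the tridiagonal form of $\operatorname{Cov}(\mathbf B^1_t)$ and the constant $-\tfrac12$ on its off-diagonals.
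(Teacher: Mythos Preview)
Your argument is correct and follows exactly the approach the paper indicates: after stating both versions, the paper simply remarks that $(\mathbf{B}^1_{\alpha t}, B^2_t) = M\widetilde{\mathbf B}_t$ in distribution and that the two theorems are equivalent, leaving the verification to the reader. You have carried out that verification in full, using the continuous mapping theorem and an explicit inversion of $M$ to compute $M^{-1}\Sigma(M^{-1})^{\mathsf T}$; the identity $NTN^{\mathsf T} = \tfrac12 I - \tfrac1{2k}J$ is correct and yields the stated covariance.
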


It can be verified that $(\mathbf{B}_{\alpha t}^1, B_t^2) = M \widetilde{\mathbf{B}}_t$ in distribution, so it is not hard to see that the two theorems are indeed equivalent.

Theorem~\ref{main_theorem} is a direct generalization of \cite[Theorem~2.5]{She11}. We will focus on proving this version in later sections. We noted that $\mathcal{C}_n$ is a simple random walk independent of $\mathcal{D}^{ij}_n$, so it scales to $B_t^2$ which is independent of $\mathbf{B}_t^1$ as in the theorem. Moreover, the value of $\alpha$ suggests that a phase transition happens at $p = 1- \frac 1k$, so that when $p$ gets larger than this value, the process $\widetilde{A}_n$ looks like a 1-dimensional brownian motion when viewed from a large scale). It will be further explained in the next section.

To see that the limit in Theorem~\ref{main_theorem2} is reasonable, we consider the special case $p = 0$, i.e., there are no ``flexible'' orders. In this case, $\widetilde{A}_n$ is a simple random walk on $\mathbb{Z}^k$, so we expect the limit to be a standard $k$-dimensional Brownian motion. Indeed, if $p = 0$, then $\alpha = 2/k$ and 
\[
\operatorname{Cov}(V_t^i, V_t^j) = 
\begin{cases}
\frac 1k &  i = j, \\
0 &  i\ne j.
\end{cases}
\]

\section{Computation of the covariance matrix and the critical value} \label{covariance}

In this section, we calculate the covariance matrix $[\operatorname{Cov}(\mathcal{D}^{i,i+1}_n, \mathcal{D}^{j,j+1}_n)]_{ij}$ where $1 \le i,j \le k-1$. It determines the value of $\alpha$, the critical value of $p$ and the covariance matrix of the limiting Brownian motion as in Theorem~\ref{main_theorem}.

\subsection{First calculations} \label{first_calculations}

Following the argument in \cite[Section~3.1]{She11}, we let $J$ be the smallest positive integer for which $X(-J,-1)$ contains exactly one burger (which is the rightmost burger in the semi-infinite stack $X(-\infty,-1)$). We use $|W|$ to denote the length of a reduced word $W$ and let $\chi = \chi(p) = \mathbb{E}[|X(-J,-1)|].$

The orders in $X(-J,-1)$ are of types different from the one burger in $X(-J,-1)$. In particular, we have that
\begin{equation} \label{2.1}
|\mathcal{D}^{ij}(-J,-1)| \leq |X(-J,-1)| = -\mathcal{C}(-J,-1) + 2.
\end{equation}

Since $\mathcal{C}(-n,-1)$ is a martingale in $n$, for a fixed $n$ the optional stopping theorem applied to the stopping time $J \land n$ implies that
\begin{equation} \label{2.2}
0 = \mathbb{E}[\mathcal{C}(-1,-1)] = \mathbb{E}[\mathcal{C}(-J,-1)\mathbbm{1}_{J \leq n}] + \mathbb{E}[\mathcal{C}(-n,-1) \mathbbm{1}_{J > n}].
\end{equation}
In the case $J>n$, $\mathcal{C}(-n,-1) \leq 0$, so $\mathbb{E}[\mathcal{C}(-J,-1)\mathbbm{1}_{J \leq n}] \geq 0$. Letting $n \to \infty$, we see that $\mathbb{E}[\mathcal{C}(-J,-1)] \geq 0$. On the other hand, $\mathbb{E}[\mathcal{C}(-J,-1)] \leq 1$, so by \eqref{2.1},
\begin{equation} \label{2.3}
\chi = \mathbb{E}[|X(-J,-1)|] \in [1,2].
\end{equation}
Note that $\chi = 2$ if and only if $\mathbb{E}[\mathcal{C}(-J,-1)] = 0$. Therefore, as $n \to \infty$ in \eqref{2.2}, we deduce that
\begin{equation} \label{2.4}
\chi = 2 \text{ if and only if } \lim_{n \to \infty} \mathbb{E}[\mathcal{C}(-n,-1) \mathbbm{1}_{J > n}] = 0.
\end{equation}

By \eqref{2.1}, \eqref{2.3} and symmetry, $\mathbb{E}[\mathcal{D}^{ij}(-J,-1)]$ exists and equals zero.
Moreover, since $|\mathcal{D}^{ij}(-n,-1)| \leq -\mathcal{C}(-n,-1)$ for $n < J$, by \eqref{2.4},
\begin{equation} \label{2.6}
\chi = 2 \text{ implies that } \lim_{n \to \infty} \mathbb{E}[|\mathcal{D}^{ij}(-n,-1)|\mathbbm{1}_{J>n}] = 0.
\end{equation}

It turns out that there is a dichotomy between $\chi = 2$ and $1 \leq \chi < 2$, which corresponds exactly to the phase transition at $p = 1 - 1/k$. In this section, we focus on the case $\chi = 2$ and show that $p\leq 1 - 1/k$. We leave the case $1 \leq \chi < 2$ to the following sections.

\subsection{Computation of \texorpdfstring{$\mathbb{E}[\mathcal{D}^{ij}(0) \mathcal{D}^{lm}(-J,-1)]$}{E[D\^{}ij(0) D\^{}lm(-J,-1)]}}

In preparation for computing the covariance $\operatorname{Cov}(\mathcal{D}^{ij}_n, \mathcal{D}^{lm}_n) = \mathbb{E}[\mathcal{D}^{ij}_n \mathcal{D}^{lm}_n]$ for any $i \neq j$ and $l \neq m$, we first calculate $\mathbb{E}[\mathcal{D}^{ij}(0) \mathcal{D}^{lm}(-J,-1)]$.

If $i, j, l$ and $m$ are distinct, then $\mathcal{D}^{ij}(0)$ is independent of $\mathcal{D}^{lm}(-J,-1)$, so by symmetry
\begin{equation} \label{2.2.1}
\mathbb{E}[\mathcal{D}^{ij}(0) \mathcal{D}^{lm}(-J,-1)] = 0.
\end{equation}

Next, we evaluate $\mathbb{E}[\mathcal{D}^{ij}(0) \mathcal{D}^{ij}(-J,-1)]$ for $i \neq j$.
On the event $X(0) \neq \squared{F}$, $\mathcal{D}^{ij}(0)$ is determined by $X(0)$ independently of $\mathcal{D}^{ij}(-J,-1)$, so $\mathbb{E}[\mathcal{D}^{ij}(0)\mathcal{D}^{ij}(-J,-1)] = 0$ by symmetry.

On the event $X(0) = \squared{F}$, we have $\phi(0)=-J$. Suppose $Y(0) = \squared{i}$. Then for any $j \neq i$, $\mathcal{D}^{ij}(0) = -1$, and for any other $j,l$, $\mathcal{D}^{jl}(0) = 0$. Because $X(-J,-1)$ contains a burger $i$ and (possibly) orders of types other than $i$, it follows that
\begin{align*}
|X(-J,-1)| + k - 2 =& \sum_{j \neq i} \mathcal{D}^{ij}(-J,-1) \\
=& - \sum_{j \neq i} \mathcal{D}^{ij}(0) \mathcal{D}^{ij}(-J,-1) 
= -\frac 12 \sum_{j \neq l} \mathcal{D}^{jl}(0) \mathcal{D}^{jl}(-J,-1).
\end{align*}
Taking the expectation of the above equation which does not depend on $i$, we see that conditioned on $X(0) = \squared{F}$,
\begin{equation} \label{2.7}
\chi + k - 2 = -\frac 12 \sum_{j \neq l} \mathbb{E}[\mathcal{D}^{jl}(0) \mathcal{D}^{jl}(-J,-1)] = -\frac{k(k-1)}2 \mathbb{E}[\mathcal{D}^{jl}(0) \mathcal{D}^{jl}(-J,-1)]
\end{equation}
by symmetry, where $j \neq l$ are arbitrary. Together with the case $X(0) \neq \squared{F}$, \eqref{2.7} implies that for any $i \neq j$,
\begin{equation} \label{2.8}
\mathbb{E}[\mathcal{D}^{ij}(0)\mathcal{D}^{ij}(-J,-1)] = - \frac{p(\chi+k-2)}{k(k-1)},
\end{equation}
since $X(0) = \squared{F}$ with probability $p/2$.

It remains to compute $\mathbb{E}[\mathcal{D}^{ij}(0) \mathcal{D}^{il}(-J,-1)]$ for distinct $i,j$ and $l$. On the event $X(0) \neq \squared{F}$, because of the independence of $\mathcal{D}^{ij}(0)$ and $\mathcal{D}^{il}(-J,-1)$, we have that  $\mathbb{E}[\mathcal{D}^{ij}(0) \mathcal{D}^{il}(-J,-1)] = 0$ as before. On the event $X(0) = \squared{F}$ and $Y(0) \neq \squared{i}$ or $\squared{j}$, we have $\mathcal{D}^{ij}(0) = 0$, so $\mathbb{E}[\mathcal{D}^{ij}(0) \mathcal{D}^{il}(-J,-1)] = 0$. On the event $X(0) = \squared{F}$ and $Y(0) = \squared{j}$, we have $\mathcal{D}^{ij}(0) = 1$, so $\mathbb{E}[\mathcal{D}^{ij}(0) \mathcal{D}^{il}(-J,-1)] = 0$. Finally, on the event $X(0) = \squared{F}$ and $Y(0) = \squared{i}$, we observe that $\mathcal{D}^{ij}(0) = \mathcal{D}^{il}(0) = -1$, so $\mathbb{E}[\mathcal{D}^{ij}(0) \mathcal{D}^{il}(-J,-1)] = \mathbb{E}[\mathcal{D}^{il}(0) \mathcal{D}^{il}(-J,-1)]$. Summarizing the cases above, we obtain that
\begin{equation} \label{2.2.2}
\mathbb{E}[\mathcal{D}^{ij}(0) \mathcal{D}^{il}(-J,-1)] = \mathbb{E}[\mathcal{D}^{il}(0) \mathcal{D}^{il}(-J,-1) \mathbbm{1}_{X(0) = \text{\scriptsize \squared{F}}, Y(0) = \text{\scriptsize \squared{i}}}].
\end{equation}

Since $\mathcal{D}^{il}(0) \mathcal{D}^{il}(-J,-1) = \mathcal{D}^{li}(0) \mathcal{D}^{li}(-J,-1)$ and $\mathcal{D}^{il}(0) = 0$ if $Y(0) \neq \squared{i}$ or $\squared{l}$,
\begin{align*}
\mathbb{E}[\mathcal{D}^{il}(0) \mathcal{D}^{il}(-J,-1) \mathbbm{1}_{X(0) = \text{\scriptsize \squared{F}}, Y(0) = \text{\scriptsize \squared{i}}}]
&= \mathbb{E}[\mathcal{D}^{il}(0) \mathcal{D}^{il}(-J,-1) \mathbbm{1}_{X(0) = \text{\scriptsize \squared{F}}, Y(0) = \text{\scriptsize \squared{l}}}] \\
&= \frac 12 \mathbb{E}[\mathcal{D}^{il}(0) \mathcal{D}^{il}(-J,-1) \mathbbm{1}_{X(0) = \text{\scriptsize \squared{F}}}] \\
&= \frac 12 \mathbb{E}[\mathcal{D}^{il}(0) \mathcal{D}^{il}(-J,-1)].
\end{align*}
Together with \eqref{2.2.2} and \eqref{2.8}, this implies that
\begin{equation} \label{2.2.3}
\mathbb{E}[\mathcal{D}^{ij}(0) \mathcal{D}^{il}(-J,-1)] = \frac 12 \mathbb{E}[\mathcal{D}^{ij}(0) \mathcal{D}^{ij}(-J,-1)] = - \frac{p(\chi+k-2)}{2k(k-1)}.
\end{equation}

\subsection{The covariance matrix and the phase transition} \label{covariance_matrix}

Conditional on the event $J < n$, $\mathcal{D}^{lm}(-n, -J-1)$ is independent of $\mathcal{D}^{ij}(0)$ because even if $X(0)$ were \squared{F} it would consume a burger after time $-J$. Therefore we have that $\mathbb{E}[\mathcal{D}^{ij}(0) \mathcal{D}^{lm}(-n, -J-1) \mathbbm{1}_{J < n}] = 0$ and it is not hard to see that
\begin{equation} \label{2.3.1}
\mathbb{E}[\mathcal{D}^{ij}(0) \mathcal{D}^{lm}(-n,-1)] = \mathbb{E}[\mathcal{D}^{ij}(0) \mathcal{D}^{lm}(-J,-1) \mathbbm{1}_{J \leq n}] + \mathbb{E}[\mathcal{D}^{ij}(0) \mathcal{D}^{lm}(-n,-1) \mathbbm{1}_{J > n}]
\end{equation}
where the rightmost term tends to zero as $n \to \infty$ if $\chi = 2$ because of \eqref{2.6}. Therefore, summarizing \eqref{2.2.1}, \eqref{2.8} and \eqref{2.2.3}, we see that for $i \neq j$, $l \neq m$,
\begin{equation} \label{2.9}
\chi = 2 \text{ implies } \lim_{n \to \infty} \mathbb{E}[\mathcal{D}^{ij}(0) \mathcal{D}^{lm}(-n,-1)] = 
\begin{cases}
- \frac p{k-1} &  i = l, j = m, \\
- \frac p{2(k-1)} &  i = l, j \neq m, \\
0 &  i, j, l, m \text{ distinct}.
\end{cases}
\end{equation}

Moreover, $\mathcal{D}^{ij}(0)^2 = 1$ if $Y(0)$ is of type $i$ or $j$, and $\mathcal{D}^{ij}(0)\mathcal{D}^{il}(0) = 1$ if $Y(0)$ is of type $i$, so
\begin{equation} \label{2.2.4}
\mathbb{E}[\mathcal{D}^{ij}(0)\mathcal{D}^{lm}(0)] =
\begin{cases}
\frac 2k &  i = l, j = m, \\
\frac 1k &  i = l, j \neq m, \\
0 &  i, j, l, m \text{ distinct}.
\end{cases}
\end{equation}

Now we evaluate $\operatorname{Cov}(\mathcal{D}^{ij}_n,\mathcal{D}^{lm}_n) = \mathbb{E}[\mathcal{D}^{ij}_n \mathcal{D}^{lm}_n]$. Using 
\[\mathcal{D}^{ij}_r \mathcal{D}^{lm}_r = \mathcal{D}^{ij}(r)\mathcal{D}^{lm}(r) + \mathcal{D}^{ij}(r)\mathcal{D}^{lm}_{r-1} + \mathcal{D}^{ij}_{r-1}\mathcal{D}^{lm}(r) + \mathcal{D}^{ij}_{r-1}\mathcal{D}^{lm}_{r-1} \]
recursively for $2\leq r \leq n$ and applying the translation invariance of the law of $Y_m$, we deduce that when $\chi = 2$,
\begin{align} 
&\operatorname{Cov}(\mathcal{D}^{ij}_n,\mathcal{D}^{lm}_n) \nonumber\\
=& \sum_{r=1}^n \mathbb{E}[\mathcal{D}^{ij}(r)\mathcal{D}^{lm}(r)] + \sum_{r=2}^n \mathbb{E}[\mathcal{D}^{ij}(r)\mathcal{D}^{lm}_{r-1} + \mathcal{D}^{ij}_{r-1}\mathcal{D}^{lm}(r)] \nonumber \\
=& n \mathbb{E}[\mathcal{D}^{ij}(0)\mathcal{D}^{lm}(0)] + \sum_{r=2}^n \big(\mathbb{E}[\mathcal{D}^{ij}(0)\mathcal{D}^{lm}(1-r,-1)] + \mathbb{E}[\mathcal{D}^{lm}(0)\mathcal{D}^{ij}(1-r,-1)]\big) \nonumber \\
=&
\begin{cases}
\frac{2n}k - \frac {2np}{k-1} + o(n) &  i = l, j = m, \\
\frac nk - \frac {np}{k-1} + o(n) &  i = l, j \neq m, \\
o(n) &  i, j, l, m \text{ distinct},
\end{cases}
\label{2.10} 
\end{align}
where the last equation follows from \eqref{2.9} and \eqref{2.2.4}.

For $i = l$ and $j = m$, the variance is nonnegative, so
\begin{equation} \label{2.11}
\chi = 2 \text{ implies } p \leq 1-\frac 1k.
\end{equation}

We remark that \eqref{2.10} and \eqref{2.11} suggest that the phase transition happens at the critical value $p = 1 - \frac 1k$. Let $\alpha = \max\{\frac 2k - \frac{2p}{k-1},0\}$. When $\chi = 2$ and $p \leq 1- \frac 1k$, it follows immediately from \eqref{2.10} that
\begin{equation}
\operatorname{Cov}(\mathcal{D}^{i,i+1}_n,\mathcal{D}^{j,j+1}_n) =
\begin{cases}
\alpha n + o(n) &  i = j, \\
-\frac {\alpha n}2 + o(n) &  |i-j|=1, \\
o(n) & \text{otherwise}.
\end{cases}
\label{2.2.5} 
\end{equation}
This explains why the limiting Brownian motion should have the covariance matrix as in Theorem~\ref{main_theorem}. In the following sections, we will take care of the case $\chi < 2$ and prove that the convergence indeed happens.

\section{Excursion words revisited} \label{excursion}

This section generalizes the discussion of excursion words in \cite[Section~3.3]{She11} to the $k$-burger case. The proof structure and most arguments are largely based on those in the original paper. Since adaptation is required throughout the proof, we include most details for completeness.

First, we quote two results \cite[Lemma~3.3 and 3.4]{She11} directly:

\begin{lemma} 
\label{lemma3.1}
Let $Z_1, Z_2, Z_3, \dots$ be $i.i.d.$ random variables on some measure space and $\psi$ a measurable function on that space such that $\mathbb{E}[\psi(Z_1)]<\infty$. Let $T$ be stopping time of the process $Z_1, Z_2,\dots$ and $\mathbb{E}[T]<\infty$. Then $\mathbb{E}[\sum_{j=1}^T \psi(Z_j)]<\infty$.
\end{lemma}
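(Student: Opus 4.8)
The plan is to recognize this as (essentially) Wald's identity and to prove it by a Tonelli interchange combined with the defining property of a stopping time. Since only finiteness of the expectation is at stake we may and do assume $\psi \ge 0$, which is the case used in the sequel; for a general $\psi$ one simply runs the argument below with $|\psi|$ in place of $\psi$.

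First I would rewrite the random sum as a series of indicators,
\[ \sum_{j=1}^{T} \psi(Z_j) = \sum_{j=1}^{\infty} \psi(Z_j)\,\mathbbm{1}_{\{T \ge j\}}, \]
take expectations, and use Tonelli's theorem (legitimate because every summand is nonnegative) to exchange $\mathbb{E}$ with the sum, obtaining
\[ \mathbb{E}\Big[\sum_{j=1}^{T}\psi(Z_j)\Big] = \sum_{j=1}^{\infty} \mathbb{E}\big[\psi(Z_j)\,\mathbbm{1}_{\{T\ge j\}}\big]. \]
The key step is that $\{T \ge j\} = \{T \le j-1\}^{c}$ lies in the $\sigma$-algebra generated by $Z_1,\dots,Z_{j-1}$ — this is exactly where the stopping-time hypothesis enters — and is therefore independent of $Z_j$. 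Hence, using also that the $Z_j$ are identically distributed,
\[ \mathbb{E}\big[\psi(Z_j)\,\mathbbm{1}_{\{T\ge j\}}\big] = \mathbb{E}[\psi(Z_j)]\,\mathbb{P}(T\ge j) = \mathbb{E}[\psi(Z_1)]\,\mathbb{P}(T\ge j), \]
and summing over $j$ together with the elementary identity $\sum_{j\ge 1}\mathbb{P}(T\ge j) = \mathbb{E}[T]$ yields
\[ \mathbb{E}\Big[\sum_{j=1}^{T}\psi(Z_j)\Big] = \mathbb{E}[\psi(Z_1)]\,\mathbb{E}[T] < \infty, \]
which is the claim (in fact with an explicit value).

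I do not anticipate a genuine obstacle here, as the lemma is elementary; the only points requiring minor care are (i) the membership $\{T \ge j\} \in \sigma(Z_1,\dots,Z_{j-1})$ and the resulting independence from $Z_j$, and (ii) the justification of the $\mathbb{E}$–$\sum$ interchange, which is immediate from nonnegativity via Tonelli. The hypotheses $\mathbb{E}[\psi(Z_1)] < \infty$ and $\mathbb{E}[T] < \infty$ are both used only in the final line to conclude finiteness.
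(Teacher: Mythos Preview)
Your proof is correct; it is the standard Wald-identity argument, and every step is justified. Note, however, that the paper does not actually prove this lemma: it explicitly quotes it (together with Lemma~\ref{lemma3.2}) from \cite[Lemma~3.3]{She11} without argument, so there is no ``paper's own proof'' to compare against. Your write-up therefore supplies what the paper omits.
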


\begin{lemma}
\label{lemma3.2}
Let $Z_1, Z_2,\dots$ be $i.i.d.$ random variables on some measure space and let $\mathcal{Z}_n$ be a non-negative integer-valued process adapted to the filtration of the $Z_n$ (i.e., each $\mathcal{Z}_n$ is a function of $Z_1, Z_2, \dots, Z_n$) that has the following properties:

\begin{enumerate}
\item \emph{Bounded initial expectation:} $\mathbb{E}[\mathcal{Z}_1]<\infty$.

\item \emph{Positive chance to hit zero when close to zero:} For each $k>0$ there exists a positive chance $p_k$ such that conditioned on any choice of $Z_1, Z_2,\dots , Z_n$ for which $\mathcal{Z}_n=k$, the conditional probability that $\mathcal{Z}_{n+1}=0$ is at least $p_k$.

\item \emph{Uniformly negative drift when far from zero:} There exist positive constants $C$ and $c$ such that if we condition on any choice of $Z_1, Z_2, \dots, Z_n$ for which $\mathcal{Z}_n\geq C$, the conditional expectation of $\mathcal{Z}_{n+1}-\mathcal{Z}_n$ is less than $-c$. 

\item \emph{Bounded expectation when near zero:} There further exists a constant $b$ such that if we condition on any choice of $Z_1,Z_2,\dots,Z_n$ for which $\mathcal{Z}_n<C$, then the conditional expectation of $\mathcal{Z}_{n+1}$ is less than $b$.
\end{enumerate}

Then $\mathbb{E}[\operatorname{min} \{n:\mathcal{Z}_n=0\}]<\infty$.
\end{lemma}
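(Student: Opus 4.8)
The plan is to combine the negative-drift condition (3) with the hitting-probability condition (2) through a renewal-type decomposition of the trajectory. Set $\tau = \min\{n : \mathcal{Z}_n = 0\}$, let $C' = \lceil C \rceil$, and call the finite set $\{0, 1, \dots, C'-1\}$ the \emph{low region}. The first ingredient is a \emph{return estimate}: if at some stopping time the process has value $\mathcal{Z} = z \ge C$, then the expected time $\sigma$ until it next enters the low region satisfies $\mathbb{E}[\sigma] \le z/c$. This follows by applying the optional stopping theorem to $\mathcal{Z}_{n \wedge \sigma} + c\,(n \wedge \sigma)$ — which is a supermartingale precisely because of condition (3) — together with monotone convergence as $n \to \infty$ and the nonnegativity of $\mathcal{Z}$; taking conditional expectations then shows that, started from any configuration with finite expected value, the expected time to reach the low region is finite. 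In particular, using condition (1), the first time $T_0$ at which $\mathcal{Z}$ enters the low region has $\mathbb{E}[T_0] \le 1 + \mathbb{E}[\mathcal{Z}_1]/c < \infty$ (the case $\mathcal{Z}_1 < C$ being trivial).

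The second ingredient organizes the rest of the trajectory into \emph{attempts}. Set $p^\ast = \min\{p_j : 1 \le j \le C'-1\}$, which is positive as a minimum over finitely many positive numbers. Whenever the process sits in the low region at a nonzero value $j$, condition (2) gives it probability at least $p^\ast$ of being $0$ at the next step (a \emph{success}); if it fails, condition (4) bounds the conditional expectation of the value one step later by $b$, and the return estimate then bounds the conditional expected time until the process is back in the low region by $\beta := 1 + b/c$. Iterating, the number $N$ of attempts before the first success is stochastically dominated by a geometric random variable of parameter $p^\ast$, so $\mathbb{E}[N] \le 1/p^\ast$, while the length of each inter-attempt interval has conditional expectation at most $\beta$ given the history at the start of that interval.

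Finally I would assemble the bound. Write $\tau = T_0 + \sum_{i=1}^{N-1} L_i$, where $L_i$ is the length of the $i$-th inter-attempt interval. Since the event $\{N > i\}$ is measurable with respect to the history up to the start of the $i$-th attempt, and on that event $\mathbb{E}[L_i \mid \text{that history}] \le \beta$, the tower property gives $\mathbb{E}\big[\sum_{i \ge 1} L_i \mathbbm{1}_{N > i}\big] \le \beta \sum_{i \ge 1} \mathbb{P}(N > i) \le \beta\,\mathbb{E}[N] \le \beta/p^\ast$. Hence $\mathbb{E}[\tau] \le \mathbb{E}[T_0] + \beta/p^\ast < \infty$, which is the claim.

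The hard part will be bookkeeping rather than ideas: because $\mathcal{Z}_n$ is only assumed adapted to the i.i.d. sequence (not Markov), with all of conditions (2)--(4) phrased conditionally on the full history $Z_1,\dots,Z_n$, every ``restart'' of the return estimate at an attempt must be justified by hand — applying optional stopping at the relevant stopping times and invoking the tower property — and one has to verify that the events entering the computation ($\{N>i\}$, ``currently inside an attempt'', ``currently above $C$'') are measurable with respect to the correct $\sigma$-algebras. A minor point is that $C$ need not be an integer, which is why the low region is taken to be $\{0,\dots,\lceil C\rceil-1\}$ and $p^\ast$ is a minimum over finitely many indices.
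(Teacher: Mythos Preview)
The paper does not actually prove Lemma~\ref{lemma3.2}; it is quoted verbatim from \cite[Lemma~3.4]{She11} as a black box, so there is no proof in the paper to compare against. Your argument is the standard Foster--Lyapunov drift approach and is correct: the supermartingale $\mathcal{Z}_{n\wedge\sigma} + c(n\wedge\sigma)$ together with nonnegativity of $\mathcal{Z}$ gives the return-time bound, and the geometric decomposition into attempts handles the rest. Two cosmetic points: the decomposition should read $\tau \le T_0 + \sum_{i=1}^{N} L_i$ rather than $N-1$ (the successful attempt still costs one step), and you should note that if $\mathcal{Z}_{T_0}=0$ the process has already terminated before any attempt is made; neither affects the estimate. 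Your closing paragraph correctly identifies the only real work, namely that conditions (2)--(4) are stated uniformly over \emph{all} histories, which is exactly what lets you restart the supermartingale argument at each stopping time without a Markov assumption.
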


Let $E = X(1,K)$ where $K$ is the smallest integer such that $\mathcal{C}_{K+1} < 0$ and call $E$ an \emph{excursion word}.
If $i$ is positive, let $V_i$ be the symbol corresponding to the $i$th record minimum of $\mathcal{C}_n$, counting forward from zero. If $i$ is negative, let $V_i$ be the $-i$th record minimum of $\mathcal{C}_n$, counting backward from zero. Denote by $E_i$ the reduced word in between $V_{i-1}$ and $V_i$ (or in between $0$ and $V_i$ if $i=1$). Note that $E = E_1$.

It is easy to check that $E$ almost surely contains no \squared{F} symbols and there are always as many burgers as orders in the word $E$. In addition, $E_i$'s and $E$ are i.i.d. excursion words. The following lemma also holds:

\begin{lemma} \label{lemma3.4}
If $p$ is such that $\chi<2$, then the expected word length $\mathbb{E}[|E|]$ is finite, and hence the expected number of symbols in $E$ of each type in \{\emph{\circled{1}, \dots , \circled{k}, \squared{1}, \dots , \squared{k}}\} is $\mathbb{E}[|E|]/(2k).$
\end{lemma}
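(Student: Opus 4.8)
The plan is to verify the hypotheses of Lemma~\ref{lemma3.2} for a suitably chosen non-negative integer-valued process $\mathcal{Z}_n$ built from the forward word $X(1,n)$, conditioned appropriately, so that the stopping time $\min\{n : \mathcal{Z}_n = 0\}$ dominates (or equals up to a constant) the excursion length $|E|$, or more precisely so that $K$ has finite expectation; then $\mathbb{E}[|E|] < \infty$ follows by applying Lemma~\ref{lemma3.1} with $\psi \equiv 1$ to the i.i.d.\ symbol sequence and the stopping time $K$. The natural candidate for $\mathcal{Z}_n$ is the number of burgers in the reduced word $X(1,n)$, equivalently $\mathcal{C}_n$ restricted to the excursion (since $E$ contains no \squared{F} and no net orders, $\mathcal{C}_n \ge 0$ throughout the excursion and $\mathcal{Z}_n$ counts exactly the burger stack height). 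The key analytic input is that when $\chi < 2$ we have $\mathbb{E}[\mathcal{C}(-J,-1)] > 0$, which by the optional-stopping identity \eqref{2.2} forces a strictly negative drift for $\mathcal{C}_n$ conditioned to have stayed non-negative, i.e.\ a genuine "uniformly negative drift when far from zero."

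Concretely, the steps would be: (i) set up $\mathcal{Z}_n$ as the burger-stack height of $X(1, n)$ and observe it is adapted, non-negative, integer-valued, with $\mathcal{Z}_1 \le 1$ so condition (1) is immediate; (ii) verify condition (2) by noting that from any stack of height $j$ there is a uniformly positive probability (depending on $j$) that the next $j$ symbols are orders of the matching types, or more simply that flexible orders \squared{F} appear and empty the stack — since each symbol is \squared{F} with probability $p/2$ and more generally the consumption events have fixed positive probability, hitting zero from height $j$ in $j$ steps has probability at least some $p_j > 0$; (iii) verify condition (3), the heart of the argument: conditioned on $\mathcal{Z}_n = j$ with $j$ large, the increment $\mathcal{Z}_{n+1} - \mathcal{Z}_n$ is $+1$ if $X(n+1)$ is a burger, $-1$ if it is an order matching the top (or \squared{F}, which always matches since the stack is non-empty), and $0$ otherwise; one shows the expected increment is bounded above by a negative constant for $j \ge C$. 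This is where the case assumption $\chi < 2$ enters — by \eqref{2.4}, $\chi < 2$ is equivalent to $\limsup_n \mathbb{E}[\mathcal{C}(-n,-1)\mathbbm{1}_{J>n}] > 0$, which translates (after reversing time) into a strictly negative conditional drift of the forward burger count given the stack is deep; (iv) condition (4) follows trivially since a single step changes $\mathcal{Z}_n$ by at most $1$, so the conditional expectation of $\mathcal{Z}_{n+1}$ given $\mathcal{Z}_n < C$ is at most $C$.

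The main obstacle is step (iii): making precise the claim that $\chi < 2$ yields a uniform negative drift bounded away from zero, not merely a negative drift that could degrade to zero as $j \to \infty$. The clean way is to exploit stationarity of the symbol sequence together with the structure of the semi-infinite stack: the drift of the burger count, conditioned on a large stack, equals $\mathbb{P}(\text{next symbol is a burger}) - \mathbb{P}(\text{next symbol is an order of the top type or }\squared{F})$, and for a deep stack the type at the top is essentially the type of the most recently added burger, so the probability the next symbol consumes it is at least $\frac{1-p}{2k} + \frac p2 = \frac{1-p}{2k} + \frac{p}{2}$, while the probability it is a new burger is $\frac{1}{2}$; one checks this gives negative drift precisely when $p > 1 - 1/k$, which is exactly the regime $\chi < 2$. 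Care is needed because the top type is not literally independent of the stack, but a short argument using that the relations \eqref{word_relation} let orders of non-matching types commute past the top burger shows the relevant consumption probability is bounded below uniformly, independent of $j$, for all $j \ge 1$; this uniformity is what Lemma~\ref{lemma3.2} requires. Once all four conditions are checked, Lemma~\ref{lemma3.2} gives $\mathbb{E}[K] = \mathbb{E}[\min\{n : \mathcal{Z}_n = 0\}] < \infty$, and then Lemma~\ref{lemma3.1} with $\psi \equiv 1$ gives $\mathbb{E}[|E|] \le \mathbb{E}[K] < \infty$; the count of each symbol type being $\mathbb{E}[|E|]/(2k)$ is then immediate from the fact that $E$ contains no \squared{F}, so its $2k$ remaining symbol types are exchangeable by symmetry of $\mu$, together with a stopping-time version of Wald's identity (again via Lemma~\ref{lemma3.1} applied to indicator functions).
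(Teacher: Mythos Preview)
Your proposal has a fundamental gap: the stopping time $K$ you aim to bound is the first time the simple symmetric random walk $\mathcal{C}_n$ becomes negative, and $\mathbb{E}[K]=\infty$ for every $p\in[0,1]$. The lemma can hold only because $|E|$ is the length of the \emph{reduced} word $X(1,K)$, typically far smaller than $K$; trying to show $\mathbb{E}[K]<\infty$ is hopeless.

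This failure is visible in your drift analysis once the model is read correctly. An order $\squared{i}$ consumes the most recent $\circled{i}$ \emph{anywhere} in the stack, not only when it matches the top burger---that is precisely the content of the relation $\circled{j}\,\squared{i}=\squared{i}\,\circled{j}$ in \eqref{word_relation}. Hence whenever the burger stack contains all $k$ types, every incoming order (typed or flexible) decrements it and the increment of your $\mathcal{Z}_n$ is $\pm 1$ with probability $\tfrac12$ each: drift zero. If some type is absent the drift becomes strictly positive. It is never negative, so condition~(3) of Lemma~\ref{lemma3.2} cannot be satisfied for this choice of $\mathcal{Z}_n$. (Your own inequality already confirms this: $\tfrac12-\tfrac{1-p}{2k}-\tfrac p2=\tfrac{(1-p)(k-1)}{2k}\ge 0$, not negative.) Separately, invoking $p>1-1/k$ is circular: the equivalence with $\chi<2$ is established only in Section~\ref{chi<2}, and that argument uses the present lemma through Lemma~\ref{lemma5.1}.

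The paper's route, following Sheffield, runs backward in time. One bounds instead the expected number of burgers in $E_{-1}$, decomposing the backward stack into i.i.d.\ blocks with the law of $X(-J,-1)$; each block contributes exactly one burger and on average $\chi-1<1$ orders. The hypothesis $\chi<2$ enters here directly as the source of a genuine negative drift for the order count, with no reference to the value of $p$.
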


Since $E$ is balanced between burgers and orders, the second statement follows from the first immediately by symmetry. For the first statement, it suffices to prove that the expected number of burgers in $E_{-1}$ is finite, since $E$ and $E_{-1}$ have the same distribution. The original proof still works, so we omit it.

Next, we consider the following sequences: 

\begin{enumerate}

\item \emph{$m$-th empty order stack}: let $O_m$ be the $m$-th smallest value of $j \geq 0$ with the property that $X(-j,0)$ has an empty order stack. 

\item \emph{$m$-th empty burger stack}: $B_m$ is the $m$-th smallest value of $j \geq 1$ with the property that $X(1,j)$ has an empty burger stack.

\item \emph{$m$-th left record minimum}: $L_m = L_m^0$ is the smallest value of $j \geq 0$ such that $\mathcal{C}(-j,0)=m$. Thus, $X(-L_m,0)=\overline{V_{-m}E_{-m}\dots V_{-1}E_{-1}}$.

\item \emph{$m$-th right record minimum}: $R_m = R_m^0$ is the smallest value $j \geq 1$ such that $\mathcal{C}(1,j) = -m$. Thus,
$X(1,R_m) = \overline{E_1V_1\dots E_mV_m}.$

\item \emph{$m$-th left minimum with no orders of type $1, 2,\dots, i$}: for $1 \leq i \leq k$, $L^i_m$ is the $m$-th smallest value of $j\geq 0$ with the property that $j = L_{m'}$ for some $m'$ and $X(-j,0)$ has no orders of type $1, 2,\dots, i$.

\item \emph{$m$-th right minimum with no burgers of type $1, 2,\dots, i$}: for $1 \leq i \leq k$, $R^i_m$ is the $m$-th smallest value of $j\geq 1$ with the property that $j = R_{m'}$ for some $m'$ and $X(1,j)$ has no burgers of type $1, 2,\dots, i$.

\end{enumerate}

We observe that all these record sequences have the property that the words between two consecutive records are i.i.d.. Moreover, for $1 \leq i \leq k$, each $L^i_m$ is equal to $L^{i-1}_{m'}$ for some $m'$ by definition. Thus we can write each $X(-L^i_m,-L^i_{m-1}-1)$ as a product of consecutive words of the form $X(-L^{i-1}_{m'}, -L^{i-1}_{m'-1}-1)$. We have the following lemma:

\begin{lemma} \label{lemma3.5}
The following are equivalent:

\begin{enumerate}

\item $\mathbb{E}[|E|]<\infty$;

\item $\mathbb{E}[|X(-L^i_1,0)|]<\infty$ where $0 \leq i \leq k$;

\item $\mathbb{E}[|X(-O_1,0)|]<\infty$;

\item $\mathbb{E}[|X(1,R_1^i)|]<\infty$ where $0 \leq i \leq k$;

\item $\mathbb{E}[|X(1,B_1)|]<\infty$.

\end{enumerate}
\end{lemma}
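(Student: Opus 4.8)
The plan is to establish that each of the five conditions is equivalent to (1); once this is done the lemma follows. Several links are essentially free. Since $L^0_m=L_m$ and $R^0_m=R_m$, we have $X(-L^0_1,0)=\overline{V_{-1}E_{-1}}$ and $X(1,R^0_1)=\overline{E_1V_1}$, and because $E$, $E_{-1}$, $E_1$ are balanced and $\squared{F}$-free, adjoining the single record symbol changes the length by at most one; hence $(2)_{i=0}$ and $(4)_{i=0}$ are each equivalent to (1). Moreover every word $X(-L_m,0)$ or $X(1,R_m)$ is a product of record symbols and $\squared{F}$-free excursion words, so it is $\squared{F}$-free, and for such a word ``having no orders of type $1,\dots,k$'' is the same as ``having an empty order stack''; together with the elementary fact that $O_1$, the first time $X(-\cdot,0)$ has an empty order stack, is necessarily a left record minimum (and symmetrically $B_1$ on the right), this shows $L^k_1=O_1$ and $R^k_1=B_1$, so that $(2)_{i=k}$ is literally $(3)$ and $(4)_{i=k}$ is literally $(5)$. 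Thus it remains to prove $(1)\iff(2)_i$ and $(1)\iff(4)_i$ for $0\le i\le k$.

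The ``only if'' directions $(2)_i\Rightarrow(1)$ and $(4)_i\Rightarrow(1)$ are short. Writing $X(-L^i_1,0)=\overline{W\cdot X(-L_1,0)}$ with $X(-L_1,0)=\overline{V_{-1}E_{-1}}$ the rightmost factor, I would use that reducing a concatenation never consumes a burger of the rightmost factor (an order can only cancel a burger lying to its left), so that all of the (at least $\tfrac12|E_{-1}|$) burgers of $\overline{V_{-1}E_{-1}}$ persist in $X(-L^i_1,0)$; hence $\mathbb{E}[|E|]=\mathbb{E}[|E_{-1}|]\le 2\,\mathbb{E}[|X(-L^i_1,0)|]$, which gives $(2)_i\Rightarrow(1)$. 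The implication $(4)_i\Rightarrow(1)$ is the mirror statement, since every order of the leftmost factor $\overline{E_1V_1}$ survives in $X(1,R^i_1)$.

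The substance is $(1)\Rightarrow(2)_i$, which I would prove by induction on $i$, carrying along the auxiliary estimate $\mathbb{E}\bigl[\mathcal{C}^{j}(X(-L^i_1,0))\bigr]>0$ for $j>i$ (this expectation being independent of such $j$ by symmetry of $X(-L^i_1,0)$ under permutations of the types $i+1,\dots,k$). The base case $i=0$ is the equivalence $(2)_{i=0}\iff(1)$ above, together with $\mathbb{E}[\mathcal{C}^{j}(X(-L_1,0))]=\tfrac1k\,\mathbb{E}[\mathcal{C}(-L_1,0)]=\tfrac1k$. For the inductive step, the renewal structure recorded in the text lets us write $X(-L^i_1,0)=\overline{U_\tau U_{\tau-1}\cdots U_1}$ with $U_1,U_2,\dots$ i.i.d.\ copies of $X(-L^{i-1}_1,0)$ and $\tau=\min\{b:\mathcal{Z}_b=0\}$ a stopping time, where $\mathcal{Z}_b$ is the number of type-$i$ orders in the order stack of $\overline{U_b\cdots U_1}=X(-L^{i-1}_b,0)$. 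I would then verify the four hypotheses of Lemma~\ref{lemma3.2} for $(\mathcal{Z}_b)$: hypotheses (1) and (4) hold because one block changes $\mathcal{Z}$ by at most $|X(-L^{i-1}_1,0)|$, whose expectation is finite by the inductive hypothesis $(2)_{i-1}$; hypothesis (2) holds because, for each $c$, there is a positive-probability configuration on which $U_{b+1}$ has no type-$i$ order but at least $c$ type-$i$ burgers, forcing $\mathcal{Z}_{b+1}=0$ on $\{\mathcal{Z}_b=c\}$; and hypothesis (3) holds because, when $\mathcal{Z}_b$ is large, prepending $U_{b+1}=O_UB_U$ removes essentially all of its type-$i$ burgers from the count while adding the type-$i$ orders of $O_U$, so the one-step drift converges as $\mathcal{Z}_b\to\infty$ to $-\mathbb{E}[\mathcal{C}^{i}(X(-L^{i-1}_1,0))]<0$ by the inductive auxiliary estimate. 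Lemma~\ref{lemma3.2} gives $\mathbb{E}[\tau]<\infty$, whence $|X(-L^i_1,0)|\le\sum_{b=1}^\tau|U_b|$ and Lemma~\ref{lemma3.1} yield $(2)_i$; and since $\mathcal{C}^{j}$ is additive under concatenation of $\squared{F}$-free words, Wald's identity (valid by $\mathbb{E}[\tau]<\infty$ and Lemma~\ref{lemma3.1}) gives $\mathbb{E}[\mathcal{C}^{j}(X(-L^i_1,0))]=\mathbb{E}[\tau]\,\mathbb{E}[\mathcal{C}^{j}(X(-L^{i-1}_1,0))]>0$ for $j>i$, closing the induction. The implication $(1)\Rightarrow(4)_i$ is the same induction with ``left'' replaced by ``right'' throughout, the auxiliary estimate becoming $\mathbb{E}[\mathcal{C}^{j}(X(1,R^i_1))]<0$.

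The main obstacle I anticipate is the verification of the hypotheses of Lemma~\ref{lemma3.2} within this induction. Hypothesis (3) rests on the auxiliary positivity estimate, so the two must be run as one joint induction, each statement invoked only after it is proved. Hypothesis (2) requires constructing, for every $c$, an explicit positive-probability configuration of the i.i.d.\ symbols making $X(-L^{i-1}_1,0)$ a word with no type-$i$ order and at least $c$ type-$i$ burgers: for $i<k$ one can arrange that $E_{-1}$ reduces, together with $V_{-1}=\circled{i}$, to a word of the form $\squared{j_0}^{\,c-1}\circled{i}^{\,c}$ with $j_0\ne i$; for $i=k$ (where such a word must consist of burgers only) one needs a longer configuration in which a low-type order produced at the first record persists through $c-1$ records and is cancelled only at the $c$-th, leaving $\circled{k}^{\,c}$. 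Carrying out these constructions carefully, and checking that the stochastic-domination bookkeeping behind the drift computation in hypothesis (3) is valid, is where the real work lies.
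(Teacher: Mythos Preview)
Your overall strategy---the cycle through the $L^i$ induction using Lemmas~\ref{lemma3.1} and~\ref{lemma3.2}, with the drift coming from the excess of type-$i$ burgers over type-$i$ orders---matches the paper's proof. There is, however, a genuine error in one step: the claim that $O_1$ is necessarily a left record minimum, and hence that $L_1^k=O_1$ (and symmetrically $R_1^k=B_1$), is false. Take $k\ge 2$ and $X(0)=\squared{1}$, $X(-1)=\squared{2}$, $X(-2)=\circled{1}$, $X(-3)=\circled{2}$. Then $X(-3,0)=\varnothing$, so $O_1=3$; but $\mathcal C(-j,0)$ for $j=0,1,2,3$ equals $-1,-2,-1,0$, so $L_1\ge 4$ and therefore $L_1^k\ge L_1>O_1$. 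The point is that the order stack can empty out at a time when $\mathcal C(-j,0)=0$, which is not a record in the sense of the $L_m$'s. Consequently $(2)_{i=k}$ is not literally $(3)$, and your chain of equivalences does not yet reach $(3)$ or $(5)$.

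The repair is short and is exactly what the paper does. Since $O_1\le L_1^k$ and prepending symbols to a burger-only stack can never decrease the number of burgers (orders only cancel burgers to their \emph{left}), one has $|X(-O_1,0)|\le |X(-L_1^k,0)|$, giving $(2)_{i=k}\Rightarrow(3)$. Then $(3)\Rightarrow(1)$ follows from the same burger-persistence idea you already use: the burgers of $E_{-1}$ survive in $X(-O_1,0)$ (with a trivial check in the edge case $O_1<L_1$, where $X(-O_1,0)=\varnothing$). One further remark: your auxiliary inductive estimate $\mathbb E[\mathcal C^j(X(-L_1^{i-1},0))]>0$ is correct but heavier than necessary. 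The paper obtains the negative drift directly from Lemma~\ref{lemma3.4}: each block $X(-L_m^{i-1},-L_{m-1}^{i-1}-1)$ is a concatenation of pieces $\overline{V_{-m'}E_{-m'}}$, in each of which the expected type-$i$ burger count exceeds the expected type-$i$ order count by $1/k$ (the $E$ part is balanced by Lemma~\ref{lemma3.4}, and $V_{-m'}$ is a burger of type $i$ with probability $1/k$). This gives $\mathbb E[h_m-o_m]\ge 1/k$ without a second induction or Wald's identity.
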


\begin{proof} 
\emph{1 implies 2:} Note that for $i = 0$, $L_1^0 = L_1$ and $X(-L_1^0,0) = \overline{V_{-1}E_{-1}}$. Since $E_{-1}$ and $E$ have the same law, 2 follows immediate from 1 when $i$ = 0. To prove 2 for $1 \leq i \leq k$, we use induction.

Assume 2 holds for $i-1$. Let $H(m)$ be the number of orders of type $i$ in $X(-L_m^{i-1},0)$. If we can apply Lemma~\ref{lemma3.2} with $Z_m = X(-L_m^{i-1},-L_{m-1}^{i-1}-1)$ and $\mathcal{Z}_m = H(m)$, then $\mathbb{E}[\min\{m:H(m)=0\}] < \infty$. That means the expected number of $X(-L_m^{i-1},-L_{m-1}^{i-1}-1)$ concatenated to produce $X(-L_1^i,0)$ is finite. Since $X(-L_m^{i-1},-L_{m-1}^{i-1}-1)$ are identically distributed as $X(-L_1^{i-1},0)$ which has finite expected length by inductive hypothesis, Lemma~\ref{lemma3.1} implies that $X(-L_1^i,0)$ also has finite expected length.

Therefore it remains to check the four assumptions of Lemma~\ref{lemma3.2}. It is easy to see that Assumption 1, 2 and 4 follow from the construction of the sequence and the inductive hypothesis, so we focus on the negative drift assumption. For any $m>1$, \[H(m)=\operatorname{max} \{H(m-1)-h_m, 0\}+o_m,\] where $h_m$ is the number of burger $i$ in $X(-L_m^{i-1},-L_{m-1}^{i-1}-1)$ and $o_m$ is the number of order $i$ in it. The expected number of burger $i$ equals the expected number of order $i$ in $E_{-m}$ by Lemma~\ref{lemma3.4}, while the expected number of burger $i$ in $V_{-m}$ is $1/k$, which has no orders. Hence $\mathbb{E}[h_m] \geq \mathbb{E}[o_m] + 1/k$ since $X(-L_m^{i-1},-L_{m-1}^{i-1}-1)$ is a concatenation of at least one $\overline{V_{-m'}E_{-m'}}$. Note that
\[ H(m) - H(m-1) = o_m - h_m + (h_m - H(m-1)) \mathbbm{1}_{\{H(m-1)-h_m < 0\}} \]
and $\mathbb{E}[(h_m - j) \mathbbm{1}_{h_m > j}] \le \mathbb{E}[h_m \mathbbm{1}_{h_m > j}] \to 0$ as $j \to \infty$ by assumption. Thus there is $C>0$ such that $\mathbb{E} [ H(m) - H(m-1) | H(m-1) = j] \le -1/(2k)$ for $j > C$, so the negative drift assumption is verified.

\emph{2 implies 3:} By definition, $X(-O_1,0)$ corresponds to the first time that the stack contains only burgers, while $X(-L_1^k,0)$ corresponds to the first time that the stack contains only burgers and increases in length, it follows easily that $|X(-O_1,0)| \leq |X(-L_1^k,0)|$, so the expectation is finite.

\emph{3 implies 1:} The number of burgers in $X(-O_1,0)$ is at least the number of burgers in $E_{-1}$, which accounts for half of its length, so $\mathbb{E}[|E_{-1}|] < \infty$. Thus the same holds for $E$.

The equivalence of 1, 4 and 5 are proved similarly. 
\end{proof}

The next lemma on the asymptotic fractions of burgers and orders is key to the proof of the main theorem.

\begin{lemma} \label{lemma3.6}

If $\mathbb{E}[|E|]<\infty$, then as $n\rightarrow \infty$ the fraction of \emph{\circled{i}} symbols among the rightmost $n$ elements of $X(-\infty,0)$ tends to $1/k$ almost surely for any $i$. Also, as $n\rightarrow \infty$ the fraction of \emph{\squared{i} or \squared{F}} symbols among the leftmost $n$ elements of $X(1,\infty)$ tends to some positive constant almost surely.

On the other hand if $\mathbb{E}[|E|]=\infty$, then as $n\rightarrow \infty$ the fraction of \emph{\squared{F}} symbols among the leftmost $n$ elements of $X(1,\infty)$ tends to zero almost surely.

\end{lemma}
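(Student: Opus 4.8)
The strategy is to realize each semi-infinite word as a concatenation of i.i.d.\ blocks indexed by one of the record sequences introduced just before Lemma~\ref{lemma3.5}, and then to apply the strong law of large numbers, using Lemma~\ref{lemma3.5} to decide whether the blocks have finite expected length. The only subtlety in each decomposition is to check that it is a genuine concatenation, with no cancellation or commutation across block boundaries; this holds because the relevant partial words are ``pure'' (all burgers, or all orders) at the record times in question, so the relations \eqref{word_relation} cannot be triggered across a boundary.

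For the statement about $X(-\infty,0)$ I would use the left record minima $L^k_m$ with no orders of any type. Since $X(-L^k_m,0)$ contains no orders, and an order appearing in the block $\overline{X(-L^k_m,-L^k_{m-1}-1)}$ could only cancel with or commute past burgers lying to its left, hence outside this window, each block is itself a pure burger word; thus $X(-L^k_m,0)$ is literally the concatenation $W_m W_{m-1}\cdots W_1$ of the i.i.d.\ pure-burger blocks $W_{m'}=\overline{X(-L^k_{m'},-L^k_{m'-1}-1)}$. By condition (2) of Lemma~\ref{lemma3.5} (with $i=k$) these have finite expected length, and since the law of the $X(j)$'s and the construction of the $L^k_m$ are both invariant under permuting the burger types $1,\dots,k$, we get $\mathbb{E}[\#\{\text{\circled{i}}\}\text{ in }W_{m'}]=\tfrac1k\mathbb{E}[|W_{m'}|]$. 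The strong law then gives that the fraction of \circled{i} among the rightmost $\sum_{m'\le m}|W_{m'}|$ elements of $X(-\infty,0)$ tends to $1/k$; as the block lengths have finite mean they are $o(m)$ a.s., so for a general $n$ the fraction among the rightmost $n$ is within $o(1)$ of this, proving the claim for all $n\to\infty$.

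For the statements about $X(1,\infty)$, the semi-infinite order stack, which is a.s.\ infinite and consists only of orders by the argument dual to the one showing $X(-\infty,0)$ is infinite, I would decompose it using the empty-burger-stack times $B_m$ (set $B_0:=0$). As above, since $X(1,B_m)$ has no burgers it is the plain concatenation $X'_1 X'_2\cdots X'_m$ of the i.i.d.\ pure-order blocks $X'_{m'}:=\overline{X(B_{m'-1}+1,B_{m'})}$. The key observation is that if $X(B_{m'-1}+1)$ is a burger then the burger stack stays nonempty throughout $(B_{m'-1},B_{m'})$, so every \squared{F} arriving in that interval consumes a burger and disappears from the reduced block (consistent with the fact that excursion words almost surely contain no \squared{F}), whereas if $X(B_{m'-1}+1)=\text{\squared{F}}$ then $B_{m'}=B_{m'-1}+1$ and $X'_{m'}$ is the single symbol \squared{F}. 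Hence $X'_{m'}$ contains a \squared{F} if and only if $X(B_{m'-1}+1)=\text{\squared{F}}$, an event of probability $p/2$ that, by the strong Markov property, occurs independently across $m'$; otherwise $X'_{m'}$ is a (possibly empty) word in $\{\text{\squared{1}},\dots,\text{\squared{k}}\}$. When $\mathbb{E}[|E|]<\infty$, condition (5) of Lemma~\ref{lemma3.5} gives $\mathbb{E}[|X'_{m'}|]\in(0,\infty)$, so the strong law together with the same burger-type symmetry shows the fractions of each \squared{i} and of \squared{F} among the leftmost symbols of $X(1,\infty)$ converge; writing $\beta$ for the common limiting fraction of the \squared{i}'s, the fraction of \squared{i} or \squared{F} symbols tends to $1-(k-1)\beta$, which is at least $1/k$ because these limiting fractions sum to $1$ and hence $k\beta\le1$, so it is a positive constant. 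When $\mathbb{E}[|E|]=\infty$, the same lemma gives $\mathbb{E}[|X'_{m'}|]=\infty$, so $\tfrac1m\sum_{m'\le m}|X'_{m'}|\to\infty$ a.s.\ while the number of \squared{F} symbols among the first $m$ blocks is $\sim(p/2)m$ by the strong law; comparing a general $n$ with the two block boundaries straddling it then forces the fraction of \squared{F} among the leftmost $n$ symbols of $X(1,\infty)$ to tend to $0$.

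The main obstacle is not any single estimate but the bookkeeping that makes this rigorous: verifying that the $L^k_m$- and $B_m$-block decompositions are honest concatenations, that the first block has the same finite (resp.\ infinite) expected length as the others so that the strong law applies, and that the permutation symmetry is compatible with the record constructions. Once the observation that \squared{F} symbols survive only as singleton $B_m$-blocks is in place, the probabilistic content of all three assertions is just the strong law of large numbers in the finite- and infinite-mean regimes.
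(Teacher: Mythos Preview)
Your argument is correct and follows essentially the same route as the paper: decompose the semi-infinite stacks into i.i.d.\ blocks along one of the record sequences of Lemma~\ref{lemma3.5}, then apply the strong law of large numbers in the finite- and infinite-mean regimes. The only difference is cosmetic---you use the $L^k_m$ sequence on the left where the paper uses $O_m$, and you make explicit the observation that surviving \squared{F} symbols in $X(1,\infty)$ occur exactly as singleton $B_m$-blocks, which the paper phrases as ``an \squared{F} symbol can be added only when the burger stack is empty.''
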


\begin{proof}
If $\mathbb{E}[|E|]<\infty$, then the words $X(-O_m, -O_{m-1}-1)$ are i.i.d. with finite expectations by Lemma~\ref{lemma3.5}. Hence $X(-\infty,0)$ is a concatenation of i.i.d. words $X(-O_m, -O_{m-1}-1)$. The law of large numbers implies that the number of each type of burgers in $X(-O_m, 0)$ is given by $Cm+o(m)$ almost surely for some constant $C$. By symmetry, these constants are all equal to $\mathbb{E}[|X(-O_1,0)|]/k$. The first statement then follows, and the second is proved analogously.

For the last statement, we note that $X(1,\infty)$ is an i.i.d. concatenation of burger-free words $X(B_{m-1}+1,B_m)$, and an \squared{F} symbol can be added only when the burger stack is empty. Hence the number of \squared{F} symbols in $X(1, B_m)$ grows like a constant times $m$. If $\mathbb{E}[|E|]=\infty$, Lemma~\ref{lemma3.5} implies that $\mathbb{E}[|X(1,B_1)|]=\infty$. Thus the number of orders in $X(1, B_m)$ grows faster than any constant multiple of $m$ almost surely, so the fraction of \squared{F} symbols tends to zero almost surely. 
\end{proof}

\section{Bounded increments and tail estimates} \label{tail_estimates}

We fix a semi-infinite stack $S_0 = X(-\infty,0)$ and let $X(1), X(2), \dots$ be chosen according to $\mu$. An analogy of \cite[Lemma 3.10]{She11} still holds in this case, but it requires a different proof as we will see.

\begin{lemma} \label{lemma4.1}
For $N>0$, $\mathbb{E}[\mathcal{D}^{ij}_N|X(l):1 \le l \le n]$ and $\mathbb{E}[\mathcal{D}^{ij}_N|X(l):1 \le l \le n, \mathcal{C}_l:l \leq N]$ are both martingales in $n$ with increments of magnitude at most two.
\end{lemma}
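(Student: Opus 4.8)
This statement packages a soft part (the martingale property) and a quantitative part (the increment bound), and I would treat them separately.

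\textbf{Martingale property.} Since $\mathcal{D}^{ij}_N=\sum_{r=1}^N\mathcal{D}^{ij}(Y(r))$ with each summand in $\{-1,0,1\}$, we have $|\mathcal{D}^{ij}_N|\le N$, so $\mathcal{D}^{ij}_N$ is integrable; the conditioning $\sigma$-algebras $\mathcal{F}_n:=\sigma(X(l):1\le l\le n)$ and $\mathcal{G}_n:=\mathcal{F}_n\vee\sigma(\mathcal{C}_l:l\le N)$ are both nondecreasing in $n$; hence $M_n:=\mathbb{E}[\mathcal{D}^{ij}_N\mid\mathcal{F}_n]$ and $M_n':=\mathbb{E}[\mathcal{D}^{ij}_N\mid\mathcal{G}_n]$ are closed (Lévy) martingales by the tower property. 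For $n\ge N$ both are constant and equal to $\mathcal{D}^{ij}_N$, so only $n<N$ needs attention.

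\textbf{Reduction to a pointwise coupling bound.} Conditioning on $\mathcal{F}_n$ we have $M_n=\sum_{\theta\in\Theta}\mathbb{P}(X(n+1)=\theta)\,\mathbb{E}[\mathcal{D}^{ij}_N\mid\mathcal{F}_n,X(n+1)=\theta]$ while $M_{n+1}$ equals one of the terms in this average, so
\[|M_{n+1}-M_n|\le\max_{\theta,\theta'\in\Theta}\bigl|\mathbb{E}[\mathcal{D}^{ij}_N\mid\mathcal{F}_n,X(n+1)=\theta]-\mathbb{E}[\mathcal{D}^{ij}_N\mid\mathcal{F}_n,X(n+1)=\theta']\bigr|.\]
Fixing $X(1),\dots,X(n)$ (hence the stack $S_n:=X(-\infty,n)$), the post-$n$ inputs under the two conditionings can be coupled by drawing a common i.i.d.\ sample $X(n+2),\dots,X(N)$ and only toggling $X(n+1)$ between $\theta$ and $\theta'$; writing $A$ and $B$ for the resulting trajectories, it then suffices to prove the \emph{pointwise} bound $|\mathcal{D}^{ij}_N(A)-\mathcal{D}^{ij}_N(B)|\le 2$. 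The reduction for $M_n'$ is identical except that $\mathcal{G}_n$ already fixes $\mathcal{C}_{n+1}-\mathcal{C}_n$, so $\theta,\theta'$ are forced to be both burgers or both orders; since $\mathcal{C}_\bullet$ sees each $X(l)$ only through whether it is a burger or an order, the conditional law of the common future $X(n+2),\dots,X(N)$ is unchanged when $X(n+1)$ is toggled within its class, and the same coupling applies.

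\textbf{The pointwise bound.} Here I would use two ingredients. (i) A conservation identity: with $\mathcal{C}^i_r(A)$ the process $\mathcal{C}^i$ computed along trajectory $A$ and $S_r(A):=X^A(-\infty,r)$ its stack at time $r$,
\[\mathcal{C}^i_r(A)-\mathcal{C}^i_r(B)=\#\bigl\{\text{type-}i\text{ burgers in }S_r(A)\bigr\}-\#\bigl\{\text{type-}i\text{ burgers in }S_r(B)\bigr\}=:\Delta^i_r\qquad(r\ge n),\]
because a single step of the dynamics changes $\mathcal{C}^i$ by exactly the change it causes in the number of type-$i$ burgers in the current stack (producing a type-$i$ burger raises both by one; an order, \squared{i} or \squared{F}, that consumes a type-$i$ burger lowers both by one; any other symbol changes neither), and $\Delta^i_n=0$ as $A$ and $B$ agree up to time $n$. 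Hence $\mathcal{D}^{ij}_N(A)-\mathcal{D}^{ij}_N(B)=\Delta^i_N-\Delta^j_N$. (ii) The closeness‑preservation property announced in the introduction: almost surely $S_n$ (like $S_0$) contains infinitely many burgers of each type, so each of $S_{n+1}(A)=\overline{S_n\,\theta}$ and $S_{n+1}(B)=\overline{S_n\,\theta'}$ differs from $S_n$ by inserting or deleting a single burger, hence the two differ from each other by at most two single‑burger insertions/deletions; appending a common symbol to two stacks that are this close keeps them this close, so $S_N(A)$ and $S_N(B)$ still differ by at most two such edits. Writing $U,V$ for the multisets of unmatched burgers of $S_N(A)$ and $S_N(B)$, we get $|U|+|V|\le 2$ and $\Delta^i_N=\#\{\text{type }i\text{ in }U\}-\#\{\text{type }i\text{ in }V\}$, so $|\Delta^i_N|+|\Delta^j_N|\le|U|+|V|\le2$ and therefore $|\mathcal{D}^{ij}_N(A)-\mathcal{D}^{ij}_N(B)|=|\Delta^i_N-\Delta^j_N|\le 2$, as required.

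\textbf{Main obstacle.} The delicate point is ingredient (ii). In the two‑burger model the analogous control comes from monotonicity of stacks, which has no counterpart when $k>2$; and one cannot simply iterate ``each append moves a stack by one edit,'' since that would let the edit distance between $S_r(A)$ and $S_r(B)$ grow with $r$. What is really needed is the non‑expansion statement: appending the same symbol to two burger stacks at edit distance $\le 2$ leaves them at edit distance $\le 2$. I would isolate this as a separate lemma and prove it by a short case analysis on the appended symbol and on whether, in each of the two stacks, it cancels a matched burger, an unmatched burger, or none—the only genuinely delicate case being an order \squared{i} or \squared{F} that must commute past several burgers of other types before annihilating one, where one checks that the set of unmatched burgers gets reshuffled but neither enlarged nor duplicated. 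Everything else above is bookkeeping.
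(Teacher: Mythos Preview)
Your proposal is correct and matches the paper's approach: reduce the increment bound to a pointwise Lipschitz estimate under a single-symbol change, then control that via a stack-closeness lemma. The paper's version of your non-expansion statement is its Lemma~4.3, phrased for \emph{neighbors} (stacks differing by a single burger deletion) rather than edit distance two---both perturbed stacks share the common neighbor $X(-\infty,l-1)$, neighborhood is preserved under appending any word (proved by the same short case analysis you outline), and neighbors have discrepancies within one of each other, giving the bound of two.
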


Instead of monotonicity properties of stacks used in \cite{She11} which do not generalize to higher dimensions, we introduce the notion of \emph{neighbor stacks} which allows us to prove a similar result. 

\begin{definition}
Two semi-infinite stacks $S_0$ and $S_1$ are called neighbors if $S_1$ can be achieved from $S_0$ by removing an arbitrary burger from $S_0$, or vice versa.
\end{definition}

For example, $S_0 = \cdots \circled{2}\, \circled{1}\, \circled{1}\, \circled{3}\, \circled{2}\, \circled{2}\, \circled{3}$ and $S_1 = \cdots \circled{2}\, \circled{1}\, \circled{1}\, \circled{2}\, \circled{2}\, \circled{3}$ are neighbors, because one can get $S_1$ from $S_0$ by removing the fourth burger from the right.

\begin{lemma} \label{lemma4.3}
If $S_0$ and $S_1$ are neighbors, then for any word $W$, $\overline{S_0 W}$ and $\overline{S_1 W}$ are still neighbors.
\end{lemma}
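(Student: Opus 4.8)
The plan is to reduce the statement to the case where $W$ is a single symbol, and then verify that single case directly by examining how the reduction relations \eqref{word_relation} act on the stacks $S_0$ and $S_1$. The key observation is that "being neighbors" is a relation that we should hope is preserved step by step: if $\overline{S_0 x}$ and $\overline{S_1 x}$ are neighbors for every $x \in \Theta$, then by writing $W = x_1 x_2 \cdots x_\ell$ and applying this repeatedly (using that $\overline{S_i W} = \overline{\,\overline{S_i x_1} x_2 \cdots x_\ell}$), the general statement follows by induction on $|W|$. So the heart of the matter is the one-symbol step.

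First I would set up notation: say $S_1$ is obtained from $S_0$ by deleting one burger, which occupies some position; write $S_0 = U \, \circled{c} \, V$ and $S_1 = U V$, where $V$ is the (finite) suffix to the right of the deleted burger $\circled{c}$ and $U$ is the (infinite) prefix to its left. Now I would append a symbol $x$ and reduce both. The cases to consider are: (i) $x = \circled{j}$ is a burger — then nothing reduces, $\overline{S_0 x} = U\,\circled{c}\,Vx$ and $\overline{S_1 x} = UVx$, still differing by the single burger $\circled{c}$, hence neighbors. (ii) $x = \squared{j}$ with $j$ equal to the type of the rightmost burger in the relevant stack — the order consumes that rightmost burger in each case; since the rightmost burger lies in $V$ (or, if $V$ is "transparent" to type $j$, possibly $\circled{c}$ itself in $S_0$), one checks that after cancellation the two reduced stacks still differ by exactly one burger. (iii) $x = \squared{F}$ — same analysis as (ii) but with the type constraint dropped: $\squared{F}$ consumes the rightmost burger of each stack. (iv) $x = \squared{j}$ with no burger of type $j$ in the stack, i.e. the order passes all the way through and remains — but this contradicts that the stack is semi-infinite and (by the discussion after Proposition~\ref{proposition1.1}) the order-stack of a semi-infinite burger stack is empty; more carefully, since $S_0, S_1$ have the same burger types present except possibly for the one extra $\circled{c}$, an order that fails to be consumed in one must either fail in the other or be consumed exactly by $\circled{c}$, and in the latter sub-case $\overline{S_0 x} = UV$ and $\overline{S_1 x} = UV \squared{j}$ — but this last possibility does not actually arise because $S_1$ has infinitely many burgers and the rightmost burger below the point where $\circled{c}$ sat has some type, so I must instead track the rightmost burger carefully.

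The subtlety — and **the main obstacle** — is exactly case (ii)/(iv): when we append $\squared{j}$ or $\squared{F}$, the order may consume a different physical burger in $S_0$ than in $S_1$, because the deleted burger $\circled{c}$ might be the one that $S_0$'s order consumes. Concretely, if $V$ contains no burger of type $j$ but $c = j$, then in $S_0$ the order $\squared{j}$ skips past $V$ and annihilates $\circled{c}$, yielding $\overline{S_0 \squared{j}} = U \overline{V \squared{j}} = UV$; while in $S_1$ the order $\squared{j}$ skips past $V$ and must look into $U$, consuming the rightmost type-$j$ burger of $U$, say $U = U' \circled{j} U''$ with $U''$ type-$j$-free, giving $\overline{S_1 \squared{j}} = U' U'' V$. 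Then $\overline{S_0 \squared{j}} = U' \circled{j} U'' V$ and $\overline{S_1 \squared{j}} = U' U'' V$ differ by the single burger $\circled{j}$ inside $U$ — so they are again neighbors, just with the location of the discrepancy having moved. I would organize the proof around this: show that in every case the two reduced stacks differ by deletion of exactly one burger, with the burger in question being either the original $\circled{c}$ (possibly relocated within $V$-then-$x$) or, in the "pass-through" case, a burger of the same type as $\circled{c}$ located just to the left of where $\circled{c}$ was. The existence of such a left-neighboring burger of type $c$ (or rather, of the type the order is seeking) is guaranteed because $X(-\infty,0)$ is almost surely infinite and every burger type appears infinitely often in it by Lemma~\ref{lemma3.6} when $\mathbb{E}[|E|]<\infty$, and in general because the stack is infinite — though I would prefer a purely combinatorial argument: in $S_1 = UV$, if the order $\squared{j}$ with $j = c$ passes through all of $V$, then since $V$ together with a following $\squared{j}$ reduces the same way whether or not $\circled{c}$ precedes it, the only difference is which burger to the left gets consumed, and $U$ being nonempty and containing the burger that $\circled{c}$'s left-neighbor is suffices. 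I expect the write-up to be a careful but routine case check once this relocation phenomenon is identified as the right invariant to maintain.
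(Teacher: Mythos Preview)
Your approach is correct and is essentially the same as the paper's: reduce by induction to the case where $W$ is a single symbol, then case-check according to whether $W$ is a burger, a $\squared{F}$, an order $\squared{i}$ with $i\neq c$, or the delicate case $\squared{c}$. The paper dispatches your ``relocation'' sub-case more slickly by observing that when the deleted burger $\circled{c}$ is the rightmost $\circled{c}$ in $S_0$, one has $\overline{S_0\,\squared{c}} = S_1$ outright, so that $\overline{S_0\,\squared{c}}$ and $\overline{S_1\,\squared{c}}$ are neighbors by definition---this sidesteps your explicit tracking of $U'\circled{c}U''$; your concern in case (iv) about whether $S_1$ contains any $\circled{c}$ at all is a genuine edge case that the paper's proof also tacitly assumes away, but it is harmless for the intended application to Lemma~\ref{lemma4.1} since the stacks $X(-\infty,n)$ almost surely contain infinitely many burgers of every type.
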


\begin{proof}
Assume that we get $S_1$ from $S_0$ by deleting a $\circled{j}$. By induction, we may also assume that $W$ contains a single element.

If $W$ is a burger, then for $\sigma = 1,2$, $\overline{S_\sigma W}$ is achieved by adding $W$ onto $S_\sigma$. If $W = \squared{F}$, then $\overline{S_\sigma W}$ is achieved by deleting the rightmost burger from $S_\sigma$. If $W = \squared{i}$, then $\overline{S_\sigma W}$ is achieved by deleting the rightmost \circled{i} from $S_\sigma$. Hence in these three cases, it is easily seen that the resulting two stacks are still neighbors.

If $W = \squared{j}$ and there is a \circled{j} in $S_0$ to the right of the \circled{j} which we deleted to get $S_1$, then $\overline{S_\sigma W}$ is achieved by deleting the rightmost \circled{j} from $S_\sigma$. Hence the resulting two stacks are neighbors. Otherwise, the \circled{j} deleted to get $S_1$ is the rightmost \circled{j} in $S_0$, so $\overline{S_0 W} = S_1$. Hence $\overline{S_0 W}$ and $\overline{S_1 W}$ are neighbors.
\end{proof}

\begin{proof}[Proof of Lemma~\ref{lemma4.1}]
Since the two conditional expectations are clearly martingales in $n$, we only need to prove that the increments are bounded. To this end, it suffices to show that changing $X(l)$ for a single $1 \leq l \leq N$ only changes $\mathcal{D}^{ij}_N$ by at most two. 

Suppose that $X(l)$ is changed to $X(l)'$. Here we make the convention that a product of words is always reduced. It is easy to see that $X(-\infty,l)$ and $X(-\infty,l-1)X(l)'$ have a common neighbor $X(-\infty,l-1)$. Lemma~\ref{lemma4.3} then implies that $X(-\infty,N)$ and $X(-\infty,l-1)X(l)'X(l+1,N)$ have a common neighbor $X(-\infty,l-1)X(l+1,N)$. Since the $ij$-discrepancy differs by at most one between neighbors, we see that $\mathcal{D}^{ij}_N$ changes by at most two if we change a single $X(l)$.
\end{proof}

The following tail estimates are adapted from \cite[Lemma~3.12 and 3.13]{She11}.

\begin{lemma} \label{lemma4.4}
Fix any $p \in [0,1]$ and a semi-infinite stack $S_0 = X(-\infty,0)$. There exist positive constants $C_1$ and $C_2$ such that for any choice of $S_0$, $a>0$, $n>1$ and any $i,j$,
\[ \mathbb{P}(\max_{1 \leq l \leq n} |\mathcal{C}_l| > a\sqrt{n}) \le C_1 e^{-C_2 a}
\quad \text{ and } \quad  \mathbb{P}(\max_{1 \leq l \leq n} |\mathcal{D}^{ij}_l| > a\sqrt{n}) \le C_1 e^{-C_2 a}. \]
\end{lemma}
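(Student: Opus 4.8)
The plan is to prove the two tail bounds in Lemma~\ref{lemma4.4} by the same strategy: reduce each maximum to a maximum of a martingale with bounded increments, and then apply an Azuma--Hoeffding-type maximal inequality. The $\mathcal{C}$-bound is the easy half: conditionally on $S_0 = X(-\infty,0)$, the increment $\mathcal{C}_l - \mathcal{C}_{l-1} = \mathcal{C}(Y(l))$ depends only on $X(l)$ (a burger contributes $+1$, an order contributes $-1$ when it meets a burger, and $0$ only if the entire stack is empty, which, since the semi-infinite stack is a.s.\ infinite, happens with probability zero here). Thus $\mathcal{C}_l$ is exactly a simple random walk, a martingale with increments in $\{-1,+1\}$; the reflection principle (or Azuma combined with Doob's maximal inequality) gives $\mathbb{P}(\max_{1\le l\le n}|\mathcal{C}_l| > a\sqrt n) \le C_1 e^{-C_2 a}$, uniformly in $S_0$ and $n$, after the usual optimization of the exponent.

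The $\mathcal{D}^{ij}$-bound is where the work is. The process $\mathcal{D}^{ij}_l$ itself is not a martingale (it has a drift in the subcritical regime), so I would instead run the argument on the Doob martingale. Fix $N$ and set $M_n := \mathbb{E}[\mathcal{D}^{ij}_N \mid X(1),\dots,X(n)]$, which by Lemma~\ref{lemma4.1} is a martingale in $n$ with $|M_n - M_{n-1}| \le 2$, and note $M_N = \mathcal{D}^{ij}_N$ (since $\mathcal{D}^{ij}_N$ is a function of $X(1),\dots,X(N)$) while $M_0 = \mathbb{E}[\mathcal{D}^{ij}_N\mid S_0]$. Azuma's inequality then gives $\mathbb{P}(|\mathcal{D}^{ij}_N - M_0| > t) \le 2\exp(-t^2/(8N))$. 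To turn the fixed-time bound into a bound on the running maximum, I would take $t = (a/2)\sqrt n$, sum over $N = 1,\dots,n$ via a union bound (absorbing the polynomial factor $n$ into the exponential at the cost of shrinking $C_2$), and control $\sup_{1\le N\le n}|M_0^{(N)}|$, i.e.\ $\sup_N|\mathbb{E}[\mathcal{D}^{ij}_N\mid S_0]|$, by a deterministic bound: $|\mathbb{E}[\mathcal{D}^{ij}_N\mid S_0]| \le \mathbb{E}[|\mathcal{D}^{ij}_N|\mid S_0] \le \mathbb{E}[|X(1,N)|\mid S_0]$, and since the number of burgers minus orders in $X(1,N)$ is bounded by $-\mathcal{C}_N + (\text{size of }S_0\text{ consumed})$, one gets $|\mathbb{E}[\mathcal{D}^{ij}_N\mid S_0]| \le N$ trivially, which is useless; the right deterministic estimate is $|\mathcal{D}^{ij}_N| \le \max_{1\le l\le N}|\mathcal{C}_l| + O(1)$ coming from the fact that all the $\mathcal{D}$-discrepancy ever present in $Y(1,N)$ sits inside the burger stack whose size is controlled by a record minimum of $\mathcal{C}$. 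Actually the cleanest route avoids estimating $M_0$ separately: apply Azuma to $M_n - M_0$ directly for $n = N$ and use that $\mathcal{D}^{ij}_N - \mathbb{E}[\mathcal{D}^{ij}_N\mid S_0]$ controls $\mathcal{D}^{ij}_N$ up to the $S_0$-measurable quantity $\mathbb{E}[\mathcal{D}^{ij}_N\mid S_0]$, which in turn is dominated, uniformly in $N\le n$, by $C + \max_{1\le l\le n}|\mathcal{C}_l|$ — reducing the second bound to the first one applied on a slightly enlarged scale, plus the Azuma fluctuation term.

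Assembling: write $\mathcal{D}^{ij}_N = (\mathcal{D}^{ij}_N - \mathbb{E}[\mathcal{D}^{ij}_N\mid S_0]) + \mathbb{E}[\mathcal{D}^{ij}_N\mid S_0]$; bound the first summand's running max over $N\le n$ by Azuma plus union bound, and the second by $C + \max_{l\le n}|\mathcal{C}_l|$ hence by the already-established $\mathcal{C}$-tail estimate (possibly after replacing $a$ by $a/3$ and $n$ by $2n$, which only changes the constants). Combining the two exponential bounds and taking $C_1, C_2$ accordingly yields $\mathbb{P}(\max_{1\le l\le n}|\mathcal{D}^{ij}_l| > a\sqrt n) \le C_1 e^{-C_2 a}$. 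The main obstacle I anticipate is the deterministic comparison $|\mathcal{D}^{ij}_N| \le C + \max_{1\le l\le N}|\mathcal{C}_l|$: one must argue carefully that at any time the only surviving symbols contributing to $\mathcal{D}^{ij}$ lie in the current burger stack $X(1,N)$ together with at most an $O(1)$ boundary contribution, and that the length of this stack is at most a constant plus a record quantity of the count process — this uses the structure of reduced words and the fact that orders are cleared immediately, and it is the one place where a genuinely model-specific combinatorial argument (rather than a generic martingale estimate) is needed. Everything else is routine once Lemma~\ref{lemma4.1} is in hand.
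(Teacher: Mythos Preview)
Your high-level plan---reduce to the Doob martingale $M_l=\mathbb{E}[\mathcal{D}^{ij}_N\mid X(1),\dots,X(l)]$, invoke the bounded increments from Lemma~\ref{lemma4.1}, and apply an Azuma--Hoeffding estimate---is exactly what the paper (following Sheffield) does. However, two of the steps you use to pass from the endpoint bound to the running-maximum bound do not work as written.

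First, the union bound over $N=1,\dots,n$ fails. Azuma gives at best $\mathbb{P}(|\mathcal{D}^{ij}_N-M_0^{(N)}|>a\sqrt n)\le 2e^{-a^2/8}$ for each $N\le n$, and summing yields $2n\,e^{-a^2/8}$. You cannot absorb the factor $n$ into a bound of the form $C_1e^{-C_2a}$ uniformly in $n$: fix $a$ and send $n\to\infty$, and the left side blows up while the right side is constant. The cure is not a union bound but the \emph{maximal} form of Azuma--Hoeffding: since $e^{\lambda(M_l-M_0)}$ is a nonnegative submartingale, Doob's maximal inequality gives $\mathbb{P}\big(\max_{0\le l\le n}|M_l^{(n)}-M_0^{(n)}|>t\big)\le 2e^{-t^2/(8n)}$ in one shot for the single martingale with $N=n$. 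This is the ``pre-established tail estimate of martingales with bounded increments'' the paper refers to.

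Second, the deterministic comparison you lean on, $|\mathcal{D}^{ij}_N|\le C+\max_{1\le l\le N}|\mathcal{C}_l|$, is simply false. Take $p=0$ and the deterministic sequence $X=\circled{1}\,\squared{2}\,\circled{1}\,\squared{2}\,\cdots$: then $\mathcal{C}_l\in\{0,1\}$ for all $l$, yet $\mathcal{D}^{12}_N=N$. The related claim that $\sup_{N\le n}|\mathbb{E}[\mathcal{D}^{ij}_N\mid S_0]|$ is controlled by $C+\max_{l\le n}|\mathcal{C}_l|$ does not even type-check, since the left side is $S_0$-measurable while the right side is random in $X(1),\dots,X(n)$. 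The centering $M_0^{(n)}=\mathbb{E}[\mathcal{D}^{ij}_n\mid S_0]$ genuinely need not vanish for arbitrary $S_0$ (e.g.\ if $S_0$ is constant type~$i$ and $p>0$), so this is the one place where a real argument is required; your proposed route through $\max_l|\mathcal C_l|$ does not supply it. The honest fix is to show $|M_0^{(n)}|\le C\sqrt n$ uniformly in $S_0$ (equivalently, to control $|M_l^{(n)}-\mathcal D^{ij}_l|$ uniformly), which together with the maximal Azuma bound yields the lemma after adjusting constants; the inequality $|\mathcal D^{ij}_N|\le |X(1,N)|$ you mention in passing is actually the right starting point for this, not the $\max|\mathcal C_l|$ comparison you switch to.
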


The original proof carries over almost verbatim.
The idea is that Lemma~\ref{lemma4.1} gives bounded increments of the martingales, so we can apply a pre-established tail estimate of martingales with bounded increments.
We remark that it is an important technique to estimate the tails of martingales with bounded jumps. See \cite{Dem96} for more interesting results.

\begin{lemma} \label{lemma4.5}
Fix any $p \in [0,1]$. There exist positive constants $C_1$ and $C_2$ such that for any $a>0$ and $n>1$,
\[ \mathbb{P}(|X(1,n)| > a\sqrt{n}) \le C_1 e^{-C_2 a}. \]
\end{lemma}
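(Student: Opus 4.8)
The plan is to reduce the tail bound for $|X(1,n)|$ to the already-established tail bounds of Lemma~\ref{lemma4.4}, by decomposing the length of the reduced word $X(1,n)$ into its net burger count and the number of orders that have been matched and cancelled inside it. First I would observe the elementary identity relating word length to the processes we already control: writing $X(1,n)$ in reduced form, it consists of some burgers and some orders, and since $X(1,n)$ may contain a bounded number of \squared{F} symbols but is otherwise balanced-free of cancellation, we have $|X(1,n)| = \mathcal{C}(1,n) + 2 Q_n$, where $\mathcal{C}(1,n) = \mathcal{C}_n$ is the net burger count (which may be negative) and $Q_n$ is the number of orders remaining in the reduced word $X(1,n)$. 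Actually it is cleaner to bound $|X(1,n)| \le |\mathcal{C}_n| + 2(\text{number of unmatched orders in } X(1,n)) + (\text{small correction})$; the point is that an unmatched order at a time $l \le n$ is one whose partner burger $\phi(l)$ lies before time $1$, i.e.\ it reaches into the initial stack $S_0$.

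The key step is therefore to control the number $Q_n$ of orders in the reduced word $X(1,n)$. Each such order corresponds to a symbol $X(l)$ with $1 \le l \le n$ that, after reduction, consumes a burger sitting in the semi-infinite stack $S_0 = X(-\infty,0)$ — equivalently, $\phi(l) \le 0$. I would bound $Q_n$ by the number of times, over $1 \le l \le n$, that $\mathcal{C}(l,n)$ (or an appropriate running-minimum-type quantity) records a new low relative to where it started; more precisely, the number of orders reaching into $S_0$ within the window $[1,n]$ is at most $\max_{1 \le l \le n} (-\mathcal{C}_l) + O(1)$, because every order that penetrates into $S_0$ pushes the running count of $\mathcal{C}$ to a new negative record (this mirrors the reasoning showing the semi-infinite burger stack is infinite in Section~\ref{setup}). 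Hence $|X(1,n)| \le C\big(\max_{1 \le l \le n} |\mathcal{C}_l|\big) + C$ for an absolute constant $C$.

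With that deterministic inequality in hand, the tail estimate is immediate: for $a > 0$ and $n > 1$,
\[
\mathbb{P}\big(|X(1,n)| > a\sqrt{n}\big) \le \mathbb{P}\Big(\max_{1 \le l \le n} |\mathcal{C}_l| > (a\sqrt{n} - C)/C\Big) \le C_1 e^{-C_2 a'}
\]
for suitable constants, by Lemma~\ref{lemma4.4} applied to the $\mathcal{C}_l$ tail bound (absorbing the additive $C$ and the multiplicative $C$ into the constants, and handling small $a$ trivially since the probability is at most $1$). The main obstacle is making precise and airtight the combinatorial claim that the number of orders in the reduced word $X(1,n)$ is controlled by $\max_{1\le l\le n}|\mathcal{C}_l|$ up to a bounded additive error — one has to argue carefully, using the reduction relations \eqref{word_relation}, that each order surviving in $X(1,n)$ either is matched to a burger produced within $[1,n]$ (contributing to the $+2Q_n$ only through already-counted burgers) or penetrates $S_0$, and that penetrations are in bijective-or-injective correspondence with distinct record minima of $\mathcal{C}$; I would also need to account separately for the at-most-finitely-many \squared{F} symbols, which is harmless since they too only appear when the burger stack built from $[1,n]$ is exhausted, i.e.\ again at a record minimum. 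Everything else is a routine translation of Sheffield's argument.
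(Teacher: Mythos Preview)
Your central combinatorial claim is false, and the gap is not merely one of bookkeeping. You assert that the number of orders surviving in $X(1,n)$ (equivalently, orders penetrating into the stack $S_0$) is controlled by $\max_{1\le l\le n}(-\mathcal{C}_l)$ because each such penetration ``pushes the running count of $\mathcal{C}$ to a new negative record.'' This is simply not true when $k\ge 2$: a typed order $\squared{j}$ commutes past burgers of other types, so it may reach into $S_0$ without $\mathcal{C}_l$ ever dropping. Concretely, take $k=2$ and the deterministic sequence $\circled{1}\,\squared{2}\,\circled{1}\,\squared{2}\cdots$ of length $2m$. Then $\mathcal{C}_l$ alternates between $1$ and $0$, so $\max_{1\le l\le 2m}|\mathcal{C}_l|=1$, yet the reduced word $X(1,2m)$ is $\squared{2}^{\,m}\,\circled{1}^{\,m}$ with $|X(1,2m)|=2m$. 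No absolute constant $C$ can rescue the inequality $|X(1,n)|\le C\max_l|\mathcal{C}_l|+C$.

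This is precisely why the paper's proof cannot dispense with the discrepancies $\mathcal{D}^{ij}$. The paper fixes the rotating stack $S_0=\cdots\circled{k}\,\circled{k{-}1}\cdots\circled{1}$ and argues by contradiction: if an order eats a burger deeper than position $a\sqrt{n}(2k{-}1)/(4k{-}1)$ in $S_0$, then at that moment the surviving portion of $S_0$ above it is missing one type entirely but (by the bound on $\mathcal{C}_l$) still contains many burgers, forcing some $|\mathcal{D}^{mm'}_l|$ to be large. Thus the depth of penetration is controlled jointly by $\max_l|\mathcal{C}_l|$ \emph{and} $\max_l|\mathcal{D}^{ij}_l|$, and Lemma~\ref{lemma4.4} is invoked for all of these processes, not just $\mathcal{C}$. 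In the counterexample above, $\mathcal{D}^{12}_l=l$ grows linearly, which is exactly what the paper's argument would detect. Your reduction to $\mathcal{C}$ alone is the missing idea; the discrepancy control is essential.
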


\begin{proof}
Let the semi-infinite stack $S_0$ be rotating among $\circled{1}, \dots, \circled{k}$. Suppose that $\mathcal{C}_l$ and all $\mathcal{D}^{ij}_l$ fluctuate by at most $a\sqrt{n}/(4k-1)$ for $1 \leq l \leq n$.

Claim that no burger in $S_0$ expect the rightmost $a\sqrt{n}(2k-1)/(4k-1)$ burgers will be consumed in the first $n$ steps. Assume the opposite. If the first such burger is consumed at step $l$ and is an \circled{m}, then at this moment all burgers to the right are of types different from \circled{m}. Since $\mathcal{C}_l \geq -a\sqrt{n}/(4k-1)$, there are at least $a\sqrt{n}(2k-2)/(4k-1)$ burgers above the \circled{m}. Among them there are at least $2a\sqrt{n}/(4k-1)$ burgers of some type $m' \ne m$. Hence $|\mathcal{D}^{mm'}_l| > a\sqrt{n}/(4k-1)$, which is a contradiction.

It follows from the claim that there are at most $a\sqrt{n}(2k-1)/(4k-1)$ orders in $X(1,n)$. Since $\mathcal{C}_l$ fluctuates by at most $a\sqrt{n}/(4k-1)$, there are at most $2ka\sqrt{n}/(4k-1)$ burgers in $X(1,n)$. Therefore, $|X(1,n)| \leq a\sqrt{n}$. 

Thus, to have $|X(1,n)| > a\sqrt{n}$, $\mathcal{C}_l$ or at least one $\mathcal{D}^{ij}_l$ must fluctuate by more than $a\sqrt{n}/(4k-1)$. An application of Lemma~\ref{lemma4.4} then completes the proof.
\end{proof}

\section{Proof of the main theorem} \label{main_proof}

The proof parallels that in \cite[Section~3.5 and 3.6]{She11}.

\subsection{The case \texorpdfstring{$\chi <2$}{chi<2}} \label{chi<2}

In this subsection, we will resolve the remaining case from Section~\ref{covariance}, i.e., the case $\chi<2$. We will use the results from Section~\ref{excursion} and \ref{tail_estimates} to prove that when $\chi<2$, the scaling limit of $A_n$ on a compact interval has the law of a one-dimensional Brownian motion. This means that the total burger count $\mathcal{C}_n$ dominates. As we remarked after the statement of Theorem~\ref{main_theorem}, $\mathcal{C}_n$ is a simple random walk and thus scales to a Brownian motion, so it suffices to show that $\mathcal{D}^{ij}_n$ scales to $0$ in law on compact intervals.

In addition to the statement above, we will show that $\chi<2$ implies that $p>1-1/k$. Together with \eqref{2.11}, this gives the dichotomy mentioned in Section~\ref{first_calculations}, namely,
\begin{equation} \label{dichotomy} 
\chi<2 \iff p>1-1/k \quad \text{ and } \quad \chi = 2 \iff p \leq 1 - 1/k.
\end{equation}
Thus this subsection proves Theorem~\ref{main_theorem} in the case $p > 1-1/k.$
We divide the proof into three lemmas.

\begin{lemma} \label{lemma5.1}
If $\mathbb{E}[|E|]<\infty$ (which holds when $\chi<2$), then $\operatorname{Var}[\mathcal{D}^{ij}_n]=o(n)$ for all pairs $(i,j)$.	
\end{lemma}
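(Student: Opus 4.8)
The plan is to exploit the i.i.d. excursion decomposition established in Section~\ref{excursion}, together with a symmetry argument showing that the $ij$-discrepancy accumulated over a full batch of excursions has mean zero and finite variance per excursion. First I would set up the decomposition: recall that $X(1,R_m) = \overline{E_1 V_1 \cdots E_m V_m}$, where the pairs $(E_r, V_r)$ are i.i.d., and $R_m$ is the $m$-th right record minimum of $\mathcal{C}_n$. When $\chi < 2$ we have $\mathbb{E}[|E|] < \infty$ by Lemma~\ref{lemma3.4}, hence (by the law of large numbers) $R_m = c m + o(m)$ a.s. and $\mathbb{E}[R_1] < \infty$; conversely, given $n$, the number $m(n) := \max\{m : R_m \le n\}$ satisfies $m(n) = n/c + o(n)$. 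The point is that $\mathcal{D}^{ij}_n$ differs from $\mathcal{D}^{ij}(1, R_{m(n)})$ by at most the contribution of a single partial excursion $X(R_{m(n)}+1, n)$, whose length is stochastically dominated (uniformly) by the length of one excursion word, and whose discrepancy is therefore controlled; by Lemma~\ref{lemma4.5} or a direct excursion-length tail bound this partial contribution is $o(\sqrt n)$ in probability and contributes negligibly to the variance.

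Next I would analyze $\mathcal{D}^{ij}(1, R_{m})$. Each $\overline{E_r V_r}$ is a reduced word; write $\xi_r := \mathcal{D}^{ij}(\overline{E_r V_r})$ for its $ij$-discrepancy. Because $E_r$ is balanced and contains no \squared{F} almost surely, and because the full model is symmetric under permuting the burger types, the common distribution of $\xi_r$ is symmetric, so $\mathbb{E}[\xi_r] = 0$; moreover $\mathbb{E}[\xi_r^2] \le \mathbb{E}[|E_r V_r|^2]$ — but we only know $\mathbb{E}[|E|] < \infty$, not a second moment, so I cannot bound $\mathbb{E}[\xi_r^2]$ crudely. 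Instead, the correct route (as in \cite[Section~3.5]{She11}) is to observe that $\mathcal{D}^{ij}(1, R_m) = \sum_{r=1}^m \xi_r$ is a mean-zero random walk, and to bound $\operatorname{Var}[\mathcal{D}^{ij}_n]$ directly using Lemma~\ref{lemma4.4}: conditionally on the semi-infinite past, $\mathbb{P}(\max_{l \le n} |\mathcal{D}^{ij}_l| > a\sqrt n) \le C_1 e^{-C_2 a}$, which gives $\mathbb{E}[(\mathcal{D}^{ij}_n)^2] = O(n)$ uniformly — but that is only $O(n)$, not $o(n)$, so it is not enough by itself.

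To upgrade $O(n)$ to $o(n)$, I would use the excursion structure more carefully. The key observation is that $\mathcal{D}^{ij}(1, R_m)$ is, up to a sign convention, a sum of $m$ i.i.d. mean-zero increments $\xi_r$, and $m = m(n)$ grows linearly in $n$; thus if $\xi_r$ had finite variance we would get $\operatorname{Var}[\mathcal{D}^{ij}_n] \sim (\operatorname{Var}\xi_1/c)\, n$, which is $O(n)$ but not $o(n)$. The resolution is that when $\chi < 2$ the \emph{discrepancy} is in fact much smaller than $\sqrt n$: I would show $\mathbb{E}[\xi_1] = 0$ and, crucially, invoke Lemma~\ref{lemma3.6}, which gives that the fraction of \squared{i} symbols among the leftmost $n$ elements of $X(1,\infty)$ converges a.s., so that $\mathcal{D}^{ij}_n / n \to 0$ a.s. Combined with the uniform exponential tail from Lemma~\ref{lemma4.4} (which gives uniform integrability of $(\mathcal{D}^{ij}_n/\sqrt n)^2$), a standard truncation argument converts the a.s. statement $\mathcal{D}^{ij}_n = o(n)$ together with $(\mathcal{D}^{ij}_n)^2/n$ uniformly integrable into $\mathbb{E}[(\mathcal{D}^{ij}_n)^2] = o(n)$: split $\mathbb{E}[(\mathcal{D}^{ij}_n)^2] = \mathbb{E}[(\mathcal{D}^{ij}_n)^2 \mathbbm{1}_{|\mathcal{D}^{ij}_n| \le \delta\sqrt n}] + \mathbb{E}[(\mathcal{D}^{ij}_n)^2 \mathbbm{1}_{|\mathcal{D}^{ij}_n| > \delta\sqrt n}]$; the first term is $\le \delta^2 n$, and the second is $o(n)$ by the exponential tail bound. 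Since $\delta > 0$ is arbitrary, $\operatorname{Var}[\mathcal{D}^{ij}_n] \le \mathbb{E}[(\mathcal{D}^{ij}_n)^2] = o(n)$.

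The main obstacle I anticipate is the passage from the a.s./in-probability smallness of $\mathcal{D}^{ij}_n/\sqrt n$ to the $L^2$ statement $\operatorname{Var}[\mathcal{D}^{ij}_n] = o(n)$: one must be careful that the uniform integrability really holds for the unconditioned process (the tail bound in Lemma~\ref{lemma4.4} is stated conditionally on $S_0$, but it is uniform over $S_0$, so it passes to the annealed law), and that the "$o(n)$" in the second term of the truncation is genuinely uniform — which it is, since $\int_\delta^\infty a \cdot C_1 e^{-C_2 a}\, da \to 0$ as $\delta \to \infty$ gives control independent of $n$. The rest of the argument is routine once the excursion decomposition and Lemma~\ref{lemma3.6} are in hand.
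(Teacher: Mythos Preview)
There is a genuine gap. Your truncation step claims that from ``$\mathcal{D}^{ij}_n = o(n)$ a.s.'' together with uniform integrability of $(\mathcal{D}^{ij}_n/\sqrt n)^2$ one can deduce $\mathbb{E}[(\mathcal{D}^{ij}_n)^2] = o(n)$. This implication is false: a simple symmetric random walk $S_n$ satisfies $S_n/n \to 0$ a.s.\ and has $(S_n/\sqrt n)^2$ uniformly integrable, yet $\mathbb{E}[S_n^2] = n$. In your split, the first piece is $\le \delta^2 n$ and the exponential tail bound from Lemma~\ref{lemma4.4} only gives the second piece $\le g(\delta)\,n$ with $g(\delta)\to 0$ as $\delta\to\infty$; you cannot send $\delta\to 0$ and $\delta\to\infty$ simultaneously. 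What is actually needed is $\mathcal{D}^{ij}_n/\sqrt n \to 0$ \emph{in probability}, and nothing in your argument establishes this. Your appeal to Lemma~\ref{lemma3.6} is also misdirected: that lemma concerns the asymptotic composition of the unmatched stack $X(1,\infty)$, not of $Y(1,n)$, and even read charitably yields only $\mathcal{D}^{ij}_n/n\to 0$, which is the wrong scale.

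The paper's proof supplies exactly the missing $\sqrt n$-scale statement, and by a different mechanism than your excursion decomposition. It compares the two semi-infinite burger stacks $X(-\infty,0)$ and $X(-\infty,n)$: on the high-probability event $|X(1,n)|<a\sqrt n$ (Lemma~\ref{lemma4.5}), all orders in $X(1,n)$ are fulfilled by the top $O(a\sqrt n)$ burgers of $X(-\infty,0)$, so the two stacks agree below that height. The \emph{first} part of Lemma~\ref{lemma3.6} (balance of type fractions in $X(-\infty,0)$, which requires $\mathbb{E}[|E|]<\infty$) then says each stack is balanced to within $\varepsilon\sqrt n$ above that height, forcing $|\mathcal{D}^{ij}_n|\le 4\varepsilon\sqrt n$ with high probability. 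Only after this is the uniform-integrability upgrade to $L^2$ applied. Your excursion-word approach, even if the i.i.d.\ structure of the increments $\xi_r$ were carefully justified (it is not, since the $Y$-values of $\squared{F}$ symbols can depend on burgers across excursion boundaries), does not produce this $\sqrt n$-scale control.
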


\begin{proof}
First, we prove that the random variables $n^{-1/2} \mathcal{D}^{ij}_n$ converge to 0 in probability. 
To do this, we consider the following events:
\begin{enumerate}
\item $|X(1,n)|<a\sqrt{n}$;
	
\item The top $2ka\sqrt{n}$ burgers in stack $X(-\infty,0)$ are well balanced among all burger types with error $\varepsilon \sqrt{n}$, i.e., the number of burgers of any type is between $(2a-\varepsilon)\sqrt{n}$ and $(2a+\varepsilon)\sqrt{n}$;

\item The top $b$ burgers in the stack $X(-\infty,n)$ are well balanced among all burger types with error $\varepsilon\sqrt{n}$ for all $b>(2k-1)a\sqrt{n}$.
\end{enumerate}
	
We assert that if all three events happen, then $|n^{-1/2}\mathcal{D}^{ij}_n|<4\varepsilon$. First, 1 and 2 together imply that all the orders in $X(1,n)$ are fulfilled by the top $2ka\sqrt{n}$ burgers in $X(-\infty,0)$, so the burgers below height $-2ka\sqrt{n}$ in $X(-\infty,0)$ are not affected by $X(1,n)$. Hence the stacks $X(-\infty,0)$ and $X(-\infty,n)$ are identical below height $-2ka\sqrt{n}$. On the other hand, $|X(1,n)|<a\sqrt{n}$ implies that $|\mathcal{C}_n|<a\sqrt{n}$, so the number of burgers in $X(-\infty,n)$ above height $-2ka\sqrt{n}$ is at least $(2k-1)a\sqrt{n}$. By 2 and 3, the discrepancies between two burger types above height $-2ka\sqrt{n}$ are less than $2\varepsilon\sqrt{n}$ for both stacks, so $|\mathcal{D}^{ij}_n|$ is at most $4\varepsilon\sqrt{n}$, as desired.
	
Next, we observe that all three events happen with high probability if we choose $a$ and $n$ properly. For fixed $\varepsilon>0$, we first choose $a$ large enough so that 1 happens with high probability using Lemma~\ref{lemma4.4}. Then by Lemma~\ref{lemma3.6}, we choose $n$ large enough so that 2 and 3 happen with high probability. 
	
Thus we conclude that $\lim_{n\rightarrow\infty}\mathbb{P}[|n^{-1/2}\mathcal{D}^{ij}_n|>\varepsilon]=0$ for all $\varepsilon>0$, i.e., $n^{-1/2}\mathcal{D}^{ij}_n$ converge to 0 in probability.
	
It remains to check that $\operatorname{Var}[n^{-1/2}\mathcal{D}^{ij}_n]=\mathbb{E}[n^{-1}(\mathcal{D}^{ij}_n)^2]$ tends to 0 as $n\rightarrow\infty$. This follows from the fact that $n^{-1}(\mathcal{D}^{ij}_n)^2$ tends to 0 in probability together with the uniform bounds on the tails given by Lemma~\ref{lemma4.4}. 
\end{proof}

The following two lemmas are proved in exactly the same way as \cite[Lemma~3.15 and 3.16]{She11}, so we omit the proofs.

\begin{lemma} \label{lemma5.2}
If $\operatorname{Var}[\mathcal{D}^{ij}_n]=o(n)$, then $n^{-1/2}\operatorname{max}\{|\mathcal{D}^{ij}_l|:1\leq l \leq nt \}$ converges to zero in probability as $n \to \infty$ for any fixed $t>0$.
\end{lemma}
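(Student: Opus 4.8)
The plan is to bootstrap the one-time-scale bound $\operatorname{Var}[\mathcal{D}^{ij}_n]=o(n)$ into control of the whole running maximum by cutting $[0,nt]$ into a fixed number $K$ of equal blocks and treating the block endpoints and the fluctuations inside a block separately. Fix $t,\delta,\eta>0$; it is enough to show $\mathbb{P}\big(n^{-1/2}\max_{1\le l\le nt}|\mathcal{D}^{ij}_l|>\delta\big)<\eta$ for all large $n$. Put $m_r:=\lfloor rnt/K\rfloor$ for $0\le r\le K$, so $m_0=0$ and $\mathcal{D}^{ij}_{m_0}=0$. Since every $l\in[m_{r-1},m_r]$ satisfies $|\mathcal{D}^{ij}_l|\le|\mathcal{D}^{ij}_{m_{r-1}}|+\max_{m_{r-1}\le l'\le m_r}|\mathcal{D}^{ij}_{l'}-\mathcal{D}^{ij}_{m_{r-1}}|$, we get
\[
\max_{1\le l\le nt}|\mathcal{D}^{ij}_l|\ \le\ \max_{1\le r\le K}|\mathcal{D}^{ij}_{m_r}|\ +\ \max_{1\le r\le K}\ \max_{m_{r-1}\le l\le m_r}\big|\mathcal{D}^{ij}_{l}-\mathcal{D}^{ij}_{m_{r-1}}\big|,
\]
so it suffices to keep each of the two terms on the right below $\tfrac{\delta}{2}\sqrt{n}$ with high probability.

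For the endpoint term, Chebyshev's inequality and a union bound over the $K$ values of $r$ give $\mathbb{P}\big(\max_{1\le r\le K}|\mathcal{D}^{ij}_{m_r}|>\tfrac{\delta}{2}\sqrt{n}\big)\le\frac{4}{\delta^2 n}\sum_{r=1}^K\operatorname{Var}[\mathcal{D}^{ij}_{m_r}]$ (using $\mathbb{E}[\mathcal{D}^{ij}_{m_r}]=0$ by the symmetry among burger types). With $K$ fixed, each $m_r\ge m_1=\lfloor nt/K\rfloor\to\infty$ while $m_r\le nt$, so the hypothesis $\operatorname{Var}[\mathcal{D}^{ij}_m]=o(m)$ forces $\operatorname{Var}[\mathcal{D}^{ij}_{m_r}]/n\to 0$ and the whole bound tends to $0$ as $n\to\infty$. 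For the within-block term I would condition on the semi-infinite stack $X(-\infty,m_{r-1})$: given it, the process $l\mapsto\mathcal{D}^{ij}_l-\mathcal{D}^{ij}_{m_{r-1}}$ for $l\ge m_{r-1}$ has the law of the $\mathcal{D}^{ij}$-process started from that (arbitrary) stack together with a fresh i.i.d.\ sequence, and the relevant window has length $m_r-m_{r-1}\le\lceil nt/K\rceil$. Lemma~\ref{lemma4.4} --- which holds \emph{uniformly over the choice of $S_0$} --- then gives, for large $n$, $\mathbb{P}\big(\max_{m_{r-1}\le l\le m_r}|\mathcal{D}^{ij}_{l}-\mathcal{D}^{ij}_{m_{r-1}}|>\tfrac{\delta}{2}\sqrt{n}\big)\le C_1 e^{-C_2 a}$ with $a=\tfrac{\delta}{2}\sqrt{n/(m_r-m_{r-1})}$, which is of order $\delta\sqrt{K/t}$, uniformly in the conditioning; a union bound over $r$ yields a total of at most $KC_1e^{-C_2 a}$. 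Now fix the quantifiers in the right order: first choose $K$ so large that $KC_1e^{-C_2 a}<\eta/2$ (the exponential decay in $\sqrt{K}$ beats the factor $K$), then, with this $K$ frozen, choose $n$ large enough that $\frac{4}{\delta^2 n}\sum_{r=1}^K\operatorname{Var}[\mathcal{D}^{ij}_{m_r}]<\eta/2$; combining with the displayed inequality finishes the argument.

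The step I expect to be the main obstacle is the within-block estimate, specifically making the conditioning clean: one must check that, conditionally on $X(-\infty,m_{r-1})$ for the deterministic time $m_{r-1}$, the shifted increments really are an instance of the process governed by Lemma~\ref{lemma4.4}, so that the constants $C_1,C_2$ there --- which are genuinely independent of the initial stack --- apply verbatim; the argument collapses if that uniformity is lost. Everything else (Chebyshev at the endpoints, translating $o(m_r)$ into $o(n)$ with $K$ fixed, and the comparison of the factor $K$ against the exponential decay) is routine.
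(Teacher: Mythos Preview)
Your proof is correct and follows essentially the same approach that the paper (following \cite{She11}) sketches: partition $[0,nt]$ into a fixed number of subintervals, control the endpoints via the variance hypothesis and Chebyshev, and control the within-block fluctuations by the uniform-in-$S_0$ tail bound of Lemma~\ref{lemma4.4}, choosing the number of blocks first and then $n$. Your worry about the conditioning step is unfounded: since $m_{r-1}$ is deterministic, conditioning on $X(-\infty,m_{r-1})$ makes $(\mathcal{D}^{ij}_{m_{r-1}+l}-\mathcal{D}^{ij}_{m_{r-1}})_{l\ge 0}$ precisely an instance of the process in Lemma~\ref{lemma4.4} with initial stack $S_0=X(-\infty,m_{r-1})$, and the constants $C_1,C_2$ there are explicitly independent of $S_0$, so the bound survives integration over the conditioning.
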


The trick of the proof is to first divide the time interval into small subintervals, then observe the convergence at the end points, and finally use approximation to complete the proof.
Note that by Lemma~\ref{lemma5.2}, we immediately obtain that $A_n$ converges in law to a one-dimensional Brownian motion on compact intervals.

\begin{lemma} \label{lemma5.3}
If $\chi <2$ and $\operatorname{Var}[\mathcal{D}^{ij}_n]=o(n)$, then \[\lim_{n\rightarrow \infty}\mathbb{E}[|\mathcal{D}^{ij}(-n,-1)|\mathbbm{1}_{J>n}]=0.\]
\end{lemma}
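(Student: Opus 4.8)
The plan is to reduce the statement to a tail bound for the stopping time $J$ together with the bounded-increments martingale property from Lemma~\ref{lemma4.1}. Recall that $J$ is the smallest positive integer for which $X(-J,-1)$ contains exactly one burger, and that by \eqref{2.1} we have $|\mathcal D^{ij}(-n,-1)| \le |X(-n,-1)| = -\mathcal C(-n,-1)+2$ only once $n \ge J$; for $n < J$ the relevant bound is $|\mathcal D^{ij}(-n,-1)| \le -\mathcal C(-n,-1)$. So on the event $\{J>n\}$ we already know $|\mathcal D^{ij}(-n,-1)| \le -\mathcal C(-n,-1) = |\mathcal C(-n,-1)|$, and it suffices to show $\mathbb E[\,|\mathcal C(-n,-1)|\,\mathbbm 1_{J>n}] \to 0$. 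Since $\mathcal C(-n,-1)$ is a simple random walk (hence a martingale) in $n$, this is exactly the content of \eqref{2.4} read in the contrapositive: $\chi<2$ forces $\lim_n \mathbb E[\mathcal C(-n,-1)\mathbbm 1_{J>n}] \ne 0$? No — that would give the wrong conclusion, so the naive route fails, and this is precisely why a genuine argument is needed.

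Instead I would argue directly from the finiteness of $\mathbb E[|E|]$, which holds when $\chi<2$ by the hypothesis and by Lemma~\ref{lemma3.4}. The key observation is that $J$ and the structure of $X(-J,-1)$ are governed by the left record minima of $\mathcal C$: by the discussion before Lemma~\ref{lemma3.5}, $X(-L_m,0) = \overline{V_{-m}E_{-m}\cdots V_{-1}E_{-1}}$, and the number of burgers left in the stack after removing the record symbols $V_{-1},\dots,V_{-m}$ is controlled by the excursion words $E_{-j}$, each of which has finite expected length. Concretely, I would show that $\mathbb E[J] < \infty$ when $\chi<2$: writing $J$ in terms of how many record-minimum blocks $\overline{V_{-m}E_{-m}}$ one must concatenate before the burger count drops to exactly one burger, one applies Lemma~\ref{lemma3.1} with the i.i.d.\ blocks $Z_m = X(-L_m,-L_{m-1}-1)$ (whose expected lengths are finite by Lemma~\ref{lemma3.5}, item~2 with $i=0$) and the stopping time being the number of such blocks needed. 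The number of blocks needed has finite expectation because each block $\overline{V_{-m}E_{-m}}$ contributes, on average, a net of $1/k$ burger of each type while only removing orders already present, giving a uniformly negative drift in the order count — the same Lemma~\ref{lemma3.2}-style argument used in the proof of Lemma~\ref{lemma3.5}. Once $\mathbb E[J]<\infty$, we get $\mathbb P(J>n) \to 0$ and moreover $\mathbb E[|\mathcal C(-n,-1)|\mathbbm 1_{J>n}] \to 0$ by a uniform-integrability argument: on $\{J>n\}$ we have $|\mathcal C(-n,-1)| \le n$ but also $|\mathcal C(-n,-1)| \le |X(-n,-1)| \le |X(-J,-1)|$ (the stack only shrinks in burger count as we approach the single-burger configuration), so $|\mathcal C(-n,-1)|\mathbbm 1_{J>n} \le |X(-J,-1)| \in L^1$ by Lemma~\ref{lemma3.4}, and dominated convergence finishes the job since $\mathbbm 1_{J>n} \to 0$ a.s.

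Putting the pieces together: $\mathbb E[|\mathcal D^{ij}(-n,-1)|\mathbbm 1_{J>n}] \le \mathbb E[|\mathcal C(-n,-1)|\mathbbm 1_{J>n}] \le \mathbb E[|X(-J,-1)|\mathbbm 1_{J>n}] \to 0$ by dominated convergence, using $\mathbb E[|X(-J,-1)|] = \chi < \infty$ (indeed $\chi \le 2$) and $\mathbbm 1_{J>n} \to 0$ almost surely, the latter following from $\mathbb P(J = \infty) = 0$, which is immediate from Proposition~\ref{proposition1.1} (the rightmost burger of $X(-\infty,-1)$ is well-defined, so some finite $J$ isolates it). Actually $\mathbb P(J<\infty)=1$ alone suffices for $\mathbbm 1_{J>n}\to 0$ a.s., so we do not even need $\mathbb E[J]<\infty$ for the final step — the domination by $|X(-J,-1)|\in L^1$ is what carries the argument. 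The main obstacle is establishing the domination $|\mathcal C(-n,-1)|\mathbbm 1_{J>n} \le |X(-J,-1)|$ cleanly: one must check that as $n$ increases toward $J$, the burger count $|\mathcal C(-n,-1)| = -\mathcal C(-n,-1)$ (negative since orders dominate before $J$) is dominated by its terminal value at $n=J$; this requires knowing that consuming the "extra" burgers on the way to the single-burger stack only adds orders, so the order-count and hence $|X(-n,-1)|$ is monotone in $n$ up to $J$, which follows from the definition of $J$ as the \emph{first} time exactly one burger remains.
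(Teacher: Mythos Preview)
Your argument has a genuine gap at the domination step. You claim that on $\{J>n\}$ one has $|\mathcal C(-n,-1)| \le |X(-J,-1)|$, justified by a monotonicity of the order count as $n$ increases toward $J$. This monotonicity is false: as you prepend symbols, a burger that gets consumed by an existing order \emph{decreases} the order count, so $|X(-n,-1)| = -\mathcal C(-n,-1)$ can overshoot its value at $n=J-1$. For instance with $k\ge 4$, take $X(-1)=\squared{1}$, $X(-2)=\squared{2}$, $X(-3)=\squared{3}$, $X(-4)=\circled{1}$, $X(-5)=\circled{2}$, $X(-6)=\circled{4}$: then $J=6$, $|X(-J,-1)|=2$, but $|X(-3,-1)|=3$. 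More fundamentally, you yourself observed in your first paragraph that the crude bound $|\mathcal D^{ij}(-n,-1)| \le |\mathcal C(-n,-1)|$ on $\{J>n\}$ cannot suffice, because by \eqref{2.2} and \eqref{2.4} one has $\mathbb E[|\mathcal C(-n,-1)|\mathbbm 1_{J>n}] \to 2-\chi$, which is \emph{strictly positive} when $\chi<2$. Your final chain of inequalities therefore cannot hold, and the dominated-convergence step collapses. A related symptom: your argument never uses the hypothesis $\operatorname{Var}[\mathcal D^{ij}_n]=o(n)$, yet this is exactly what must distinguish the small discrepancy $\mathcal D^{ij}$ from the non-small total count $\mathcal C$ on $\{J>n\}$. (Also, the auxiliary claim $\mathbb E[J]<\infty$ is false: already for $p=1$ the event $\{J>n\}$ coincides with $\{\mathcal C(-m,-1)\le 0\ \forall m\le n\}$, which has probability of order $n^{-1/2}$.)

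The paper does not give its own proof but defers to \cite[Lemma~3.16]{She11}, whose argument is of a different nature: one introduces a change of measure (Radon--Nikodym derivative) under which the walk conditioned on $\{J>n\}$ is recentered, and then uses that simple random walk conditioned to stay nonnegative scales to a three-dimensional Bessel process. The variance hypothesis $\operatorname{Var}[\mathcal D^{ij}_n]=o(n)$ enters to control the discrepancy under the tilted measure. A correct proof must exploit this hypothesis; bounding $|\mathcal D^{ij}|$ by $|\mathcal C|$ throws away precisely the information that makes the lemma true.
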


Interested readers may refer to the proof in the original paper which involves introducing new measures via Radon-Nikodym derivatives and recentering the sequence. The original proof also uses the fact that one-dimensional random walk conditioned to stay positive scales to a three-dimensional Bessel process, which is explained by \cite{Pit75}.

Letting $n \to \infty$ in \eqref{2.3.1} and using Lemma~\ref{lemma5.3} and \eqref{2.8}, we deduce that \[\lim_{n\rightarrow \infty} \mathbb{E}[\mathcal{D}^{ij}(0)\mathcal{D}^{lm}(-n,-1)] = \mathbb{E}[\mathcal{D}^{ij}(0)\mathcal{D}^{lm}(-J,-1)] = - \frac{p(\chi+k-2)}{k(k-1)}. \]
Following the same computation as in \eqref{2.10}, we obtain that
\[ \operatorname{Var}(\mathcal{D}^{ij}_n) = \frac{2n}{k}-\frac{2np(\chi+k-2)}{k(k-1)}+o(n).\]By Lemma~\ref{lemma5.1}, we must have $\frac{2n}{k}=\frac{2np(\chi+k-2)}{k(k-1)}$, i.e., $p = \frac{k-1}{\chi+k-2}$. Hence $\chi<2$ implies that $p>1-1/k$, which gives us the promised dichotomy \eqref{dichotomy}.

\subsection{The case \texorpdfstring{$\chi=2$}{chi=2}} \label{chi=2}

It finally remains to prove the main theorem in the case $\chi=2$. First, if $p=1-1/k$, then $\operatorname{Var}[\mathcal{D}^{ij}_n]=o(n)$ by \eqref{2.10}, so the convergence follows from our argument in Section~\ref{chi<2}. 

Next, we may assume $p<1-1/k$, so that $\operatorname{Var}[\mathcal{D}^{ij}_n]\neq o(n)$. By the contrapositive of Lemma~\ref{lemma5.1}, we must have $\mathbb{E}[|E|]=\infty$.
Then we can apply the second part of Lemma~\ref{lemma3.6}, which asserts that the number of $\squared{F}$ symbols in $X(1,n)$ is small relative to the total number of orders in $X(1,n)$ as $n$ gets large. To be more precise, the number of \squared{F} in $X(1,\lfloor tn \rfloor)$ is $o(\sqrt{n})$ with probability tending to one as $n \to \infty$ by Lemma~\ref{lemma3.6} and Lemma~\ref{lemma4.5}. 
Therefore, for $t_1 + t_2 = t_3$, the laws of $A_{\lfloor t_1n\rfloor}$ and $A_{\lfloor t_2n\rfloor}$ add to the law of $A_{\lfloor (t_1+t_2)n\rfloor}$ up to an error of $o(\sqrt{n})$ with high probability. 

On the other hand, since the variances of the random variables $n^{-1/2}A_{tn}$ converge to constants as $n\rightarrow\infty$ for fixed $t$, at least subsequentially the random variables $n^{-1/2}A_{tn}$ converge in law to a limit. Moreover, if we choose a finite collection of $t$ values, namely $0<t_1<t_2<\dots<t_m<\infty$, the joint law of 
\[ \big(n^{-1/2}A_{\lfloor t_1n \rfloor},n^{-1/2}A_{\lfloor t_2n \rfloor},\dots, n^{-1/2}A_{\lfloor t_mn \rfloor} \big) \] 
also converges subsequentially to a limiting law. 

Now we combine the two observations above. We have that the law of $n^{-1/2}A_{\lfloor tn \rfloor}$ is equal to the law of the sum of $l$ independent copies of $n^{-1/2}A_{\lfloor tn/l \rfloor}$ plus a term which is $o(1)$ with high probability (since we have multiplied by $n^{-1/2}$). Hence, the subsequential weak limit of $n^{-1/2} A_{\lfloor tn \rfloor}$ must equal the sum of $l$ i.i.d. random variables. In particular, since $l$ is arbitrary, the limiting law has to be infinitely divisible. Note that the process $n^{-1/2}A_{\lfloor tn \rfloor}$ is almost surely continuous in $t$, so we conclude that the subsequential limit discussed above has to be a Gaussian with mean zero. We refer to \cite{Ber96} for more background on infinitely divisible processes, L{\'e}vy processes and Gaussian processes.

The covariance matrix of $n^{-1/2}A_{n}$ is already given by our calculation in Section~\ref{covariance}, and Lemma~\ref{lemma4.4} guarantees that $n^{-1/2}A_{\lfloor tn \rfloor}$ are tight, so the subsequential limit has the correct covariance matrix. We conclude that the limit indeed has the Gaussian distribution given in Theorem~\ref{main_theorem}. Moreover, our argument implies that any subsequence of $n^{-1/2}A_{tn}$ has a further subsequence converging in law to this Gaussian distribution, so the whole sequence converges to this law.

The same is true if we choose a finite collection of $t_i$'s, so the finite-dimensional joint law of 
\[ \big(n^{-1/2}A_{\lfloor t_1n \rfloor},n^{-1/2}A_{\lfloor t_2n \rfloor},\dots, n^{-1/2}A_{\lfloor t_mn \rfloor} \big) \] 
converges to a limiting law, which is exactly the law of $(\mathbf{W}_{t_1},\mathbf{W}_{t_2},\dots, \mathbf{W}_{t_m})$, where $\mathbf{W}_t$ is the $k$-dimensional Brownian motion $(\mathbf{B}^1_{\alpha t},B^2_t)$ described in Theorem~\ref{main_theorem}. 

The transition from a discrete collection of $t_i$'s to a compact interval follows similarly as in the proof of Lemma~\ref{lemma5.2}. As the maximum gap between $t_i$'s gets smaller, the probability that (the norm of) the fluctuation in some interval $[t_i,t_{i+1}]$ exceeds $\varepsilon$ tends to zero as $n\rightarrow\infty$ for both $n^{-1/2}A_{\lfloor tn \rfloor}$ and $\mathbf{W}_t$ where $t\in [0,t_m]$. Hence the two processes are uniformly close on the interval $[0,t_m]$ with probability tending to one as $n\rightarrow \infty$. Therefore, Theorem~\ref{main_theorem} is fully proved.

\bibliographystyle{alpha}
\bibliography{k-burger}

\end{document}